\documentclass{amsart}

\usepackage[utf8]{inputenc}
\usepackage[T1]{fontenc}
\usepackage{textcomp}
\usepackage{amsfonts}
\usepackage{amsthm}
\usepackage{amssymb}
\usepackage{pst-node}
\usepackage[all]{xy}
\usepackage{mathtools}
\usepackage{chngcntr}
\usepackage{thmtools}
\usepackage{mathrsfs}
\usepackage{tcolorbox}
\usepackage{listings}
\usepackage{hyperref}
\usepackage{color}
\usepackage{float}
\usepackage{tikz, tikz-cd}
\usepackage{enumerate}
\usepackage[nameinlink, capitalize, noabbrev]{cleveref}
\usepackage{natbib}
\usepackage{caption}
\usetikzlibrary{intersections}
\usepackage{pgfplots}
\pgfplotsset{width=10cm,compat=1.9}
\usepgfplotslibrary{external}
\tikzset{mytext/.style={font=\small, text=black}}
\tikzset{main node/.style={circle,fill=lime!30,draw,minimum size=0.5cm,inner sep=0pt},}

\hypersetup{ colorlinks=true,linkcolor=teal,    citecolor=magenta, }

\def\set#1{{\def\st{\;:\;}\left\{#1\right\}}}
\def\abs#1{\left\vert{#1}\right\vert}
\def \<#1>{{\left\langle{#1}\right\rangle}}

\def\ZZ{\mathbb Z}
\def\QQ{\mathbb Q}
\def\PP{\mathbb P}
\def\OK{\mathcal{O}_K}
\def\FF{\mathbb F}
\def\CC{\mathbb C}

\def\Q-{\overline{\mathbb Q}}

\DeclareMathOperator{\Gal}{Gal}

\DeclareMathOperator{\FPP}{FPP}
\DeclareMathOperator{\Per}{Per}
\DeclareMathOperator{\car}{char}

\DeclareMathOperator{\st}{st}
\DeclareMathOperator{\tr}{tr}

\DeclareMathOperator{\rad}{rad}

\DeclareMathOperator{\lcm}{lcm}
\DeclareMathOperator{\Frob}{Frob}

\newtheorem{Theorem}{Theorem}

\newtheorem{Corollary}[Theorem]{Corollary}
\newtheorem{proposition}{Proposition}[section]
\newtheorem{lemma}[proposition]{Lemma}

\newtheorem{theorem}[proposition]{Theorem}

\newtheorem{Question}[proposition]{Question}
\theoremstyle{definition}
\newtheorem{remark}[proposition]{Remark}
\newtheorem{Definition}[proposition]{Definition}

\title{Proportion of periodic points in reduction of polynomials}
\author{Santiago Radi}
\date{March 2026}
\address{Santiago Radi: Department of Mathematics, Texas A\&M University, 77843 College Station, U.S.A.
}
\email{santiradi@tamu.edu}
\keywords{Arithmetic dynamics, algebraic number theory, proportion of periodic points, reduction of maps in number fields}
\subjclass[2020]{Primary: 37P25, 37C25, 13A35; Secondary: 11R45, 37F10}
\thanks{The author is supported by Grigorchuk's Simons Foundation Grant MP-TSM-00002045 and the department of Mathematics of Texas A\&M University.}

\begin{document}

\begin{abstract}
In 2014, Juul, Kurlberg, Madhu and Tucker asked the following: given $K$ a number field and $f$ a rational function with coefficients in $K$, if $f_\mathfrak{p}$ denotes the reduction of $f$ modulo a prime ideal $\mathfrak{p}$ in the ring of integers of $K$, what is the limit inferior of the proportion of periodic points of $f_\mathfrak{p}$ when the norm of $\mathfrak{p}$ goes to infinity? 

Recent results of Fariña-Asategui and the author show that when $f$ is a polynomial of degree $d \geq 2$ non-linearly conjugate over $\CC$ to a Chebyshev polynomial then the limit inferior is zero.  

In this article, we address the remaining cases to give a complete classification of the problem in the case of polynomials.
\end{abstract}

\maketitle

\section{Introduction}

Let $K$ be a number field, $\OK$ its ring of integers and $f$ a polynomial with coefficients in $K$. For all but finitely many prime ideals $\mathfrak{p}$ in $\OK$, we can consider the reduction map $f_\mathfrak{p}$ acting on the residue field $\FF_\mathfrak{p} := \OK/\mathfrak{p}$. 

As $\FF_\mathfrak{p}$ is a finite field, a point $x \in \FF_\mathfrak{p}$ is either periodic or strictly preperiodic for $f_\mathfrak{p}$. How does the proportion of periodic points vary as the norm of the prime $\mathfrak{p}$ goes to infinity? 

The first problem in this direction was formulated by Schur in 1923, who asked for a classification of all polynomials with coefficients in $\QQ$ whose reduction is a bijection for infinitely many primes \cite{Schur1923} (note that if the reduction is a bijection, every point must be periodic and consequently the proportion of periodic points is $1$). The question was answered in 1970 by Fried over any number field $K$, showing that a polynomial whose reduction is a bijection for infinitely many primes must be a composition of linear polynomials with coefficients in $K$ and Dickson polynomials, where Dickson polynomials are defined by $f(x+a/x) = x^n + a^n/x^n$ with $a \in K$; see \cite{Fried1970}. In 2002, Guralnick, Müller and Saxl generalized Fried's result to rational functions over any number field, using the theory of arithmetic and geometric iterated Galois groups \cite{GuralnickMullerSaxl2003}. These maps are referred to in \cite{GuralnickMullerSaxl2003} as arithmetically exceptional.

Although we find infinitely many primes where the proportion of periodic points is $1$ for arithmetically exceptional maps, the proportion of periodic points for the other primes might be arbitrarily small. In fact, this is what we expect to happen in a random map. In \cite{FlajoletOdlyzko1990}, Flajolet and Odlyzko proved that if $f: S \rightarrow S$ is a random map, where $S$ is finite, the number of periodic points is asymptotically $$\sqrt{\frac{\pi \cdot \# S}{2}}$$ when $\# S \rightarrow +\infty$. Therefore, based on this heuristic, we would expect to find prime ideals $\mathfrak{p}$ where the reduction $f_\mathfrak{p}$ has a proportion of periodic points arbitrarily small. Unfortunately, the assumption that the reductions of a polynomial behave randomly is far from being true. This leads to the following question:

\begin{Question}[{\cite[Question 1.4]{Juul2014}}]
\label{question: proportion of periodic in reduction of maps}
Given $K$ a number field and $f \in K[x]$ a polynomial with coefficients in $K$. Denote $\Per(f,\FF_\mathfrak{p}) := \set{\alpha \in \FF_\mathfrak{p}: \alpha \text{ is periodic for } f_\mathfrak{p}}$. What is the value of 
\begin{align*}
\Per_{\inf}(f,K) := \liminf_{N(\mathfrak{p}) \rightarrow +\infty} \frac{\# \Per(f,\FF_\mathfrak{p})}{N(\mathfrak{p})}?
\end{align*}
\end{Question}

\cref{question: proportion of periodic in reduction of maps} was firstly formulated in 2014 by Juul, Kurlberg, Madhu and Tucker \cite{Juul2014}, who asked for a classification of the values of $\Per_{\inf}(f,K)$ for every number field $K$ and any rational function $f \in K(x)$; \cite[Question 1.4]{Juul2014}. 

In \cite{Juul2014}, the authors proved that $$\Per_{\inf}(x^d+c,K) = 0$$ for any $d \geq 2$ and $c \in K$ such that $0$ is not preperiodic, allowing them to give a complete classification of the problem in the case of quadratic polynomials over $\QQ$; \cite[Theorem 1.5]{Juul2014}. In \cite[Theorem 1.3]{Juul2014}, they also proved that if $f$ has two critical points whose forward orbits do not intersect, then $\Per_{\inf}(f,K) = 0$. 

It is expected that in general $\Per_{\inf}(f,K) = 0$ and this has been confirmed in most of the examples known. In accordance with this, in \cite[Theorem 1.2]{Juul2014} authors showed that given $K$ a number field and numbers $\epsilon > 0$ and $d \geq 2$, there is an open Zariski dense set $U_{d,\epsilon}$ in the moduli space of rational functions such that $$\Per_{\inf}(f,K) \leq \epsilon$$ for all $f \in U_{d,\epsilon}$. 

The only cases known where $\Per_{\inf}(f,K) > 0$ correspond to the Chebyshev polynomials. Recall that the Chebyshev polynomial of degree $d$, denoted $T_d$, is defined as the unique polynomial with coefficients in $\ZZ$ satisfying
\begin{align}
T_d \left( x + \frac{1}{x} \right) = x^d + \frac{1}{x^d}.
\label{equation: definition Chebyshev polynomial}
\end{align}

In \cite[Example 7.2]{Juul2014} is proved that if $d \geq 2$ then
\begin{align}
\Per_{\inf}(T_d,\QQ) = \left \{ \begin{matrix} 
1/2 & \mbox{if $d$ is an odd prime power,} \\ 
1/4 & \mbox{if $d$ is an even prime power,} \\
0 & \mbox{otherwise.}
\end{matrix}\right.
\label{equation: Per in chebyshev over Q}
\end{align} 

In this article, we give a complete answer to \cref{question: proportion of periodic in reduction of maps}. The cases where $f$ is a polynomial of degree $d = 0$ or $1$ are straightforward and will be solved in \cref{lemma: case degree 1}. 
When $d \geq 2$, we rely on the tools of arboreal representations, particularly, the results related to iterated Galois groups and fixed-point proportion. In 1985, Odoni initiated the study of iterated Galois groups and fixed-point proportion to solve problems in number theory \cite{Odoni1980}. In his pioneering article, he developed these tools to prove that the Dirichlet density of prime numbers dividing at least one term in the Sylvester sequence (defined recursively as $w_0 = 2$ and $w_{n+1} = 1 + w_1 \dots w_n$) is zero. His idea was then generalized to solve other problems in number theory. Specifically, in 2014 Juul, Kurlberg, Madhu and Tucker proved that 
\begin{align}
\Per_{\inf}(f,K) \leq \FPP(G_\infty(\CC,f,t)),
\label{equation: upper bound PerInf FPP intro}
\end{align}
where $\FPP$ is the fixed-point proportion, $G_\infty(\CC,f,t)$ is the iterated Galois group of $f$ over $\CC$ and $t$ is transcendental over $\CC$ (see \cref{section: Polynomials not linearly conjugate over C to T_d} for the unexplained terms here and \cite{Jones2014} for a survey about the topic). 

The recent breakthrough of Fariña-Asategui and the author  completely classifies the fixed-point proportion for geometric iterated Galois groups of polynomials \cite[Theorem 2]{FariñaRadi2026FPP}. As $\CC$ is algebraically closed, the result applies to this case obtaining that 
$\FPP(G_\infty(\CC,f,t)) = 0$ if $f$ is not linearly conjugate over $\CC$ to $\pm T_d$. Therefore $$\Per_{\inf}(f,K) = 0$$ if $f$ is not linearly conjugate over $\CC$ to $\pm T_d$. 

In the case where $f$ is linearly conjugate over $\CC$ to $\pm T_d$, then the fixed-point proportion of $G_\infty(\CC,f,t)$ is positive and therefore the upper bound in \cref{equation: upper bound PerInf FPP intro} is not enough to calculate $\Per_{\inf}(f,K)$. For these cases, we will work out $\Per_{\inf}(f,K)$ directly. We will prove that if $f$ is linearly conjugate over $\CC$ to $\pm T_d$ then
\begin{align}
\Per_{\inf}(f,K) = \Per_{\inf}(T_d,K).
\label{equation: Perinf f = Perinf Td intro}
\end{align}

Although this step can look obvious, it is not obvious if the conjugation does not have coefficients in $K$. We shall see in \cref{section: Polynomials linearly conjugate over C to T_d} that when the conjugation is not in $K$, it lies in a field extension $K'/K$ of degree $2$. If $\mathfrak{p}$ is a prime ideal in $\OK$ that is inert in $\mathcal{O}_{K'}$, then the periodic points of $f$ in $\FF_\mathfrak{p}$ are in bijection with the periodic points of $T_d$. But if $\mathfrak{p}$ is split in $\mathcal{O}_{K'}$, then the periodic points of $f$ are in bijection with the periodic points of $T_d$ in a 2-degree extension of $\FF_\mathfrak{p}$ and some work needs to be done to prove that $\Per_{\inf}(f,K)$ does not change when we restrict to the periodic points of $f$ in $\FF_\mathfrak{p}$ (see \cref{proposition: Perinf f with a not in K} for a proof of this fact).

Once we know \cref{equation: Perinf f = Perinf Td intro}, we reduce the problem to calculate $\Per_{\inf}(T_d,K)$. The calculation is based on the ideas used by the authors in \cite[Example 7.2]{Juul2014} for the case of $K = \QQ$, obtaining that
\begin{align*}
\Per_{\inf}(T_d,K) = \frac{1}{2} \liminf_{N(\mathfrak{p}) \rightarrow +\infty} \frac{r(N(\mathfrak{p})-1,d) + r(N(\mathfrak{p})+1,d)}{N(\mathfrak{p})},
\end{align*}
where $r(a,b)$ is defined as the largest divisor of $a$ coprime to $b$.  

So, in order to calculate $\Per_{\inf}(T_d,K)$ we need to find infinitely many prime ideals in $\OK$ such that $N(\mathfrak{p})-1$ and $N(\mathfrak{p})+1$ have large divisors that are not coprime to $d$, since this will make 
$r(N(\mathfrak{p})-1,d)$ and $r(N(\mathfrak{p})+1,d)$ relatively small compared to $N(\mathfrak{p})$. The main tool to find such prime ideals will be Cebotarev's theorem. However, the existence of roots of unity in $K$ will impose restrictions of the prime factors dividing $N(\mathfrak{p})-1$ for all but finitely many prime ideals. Concretely, if $m_K$ is the maximal positive number such that $K$ contains a $m_K$th root of unity, then for all but finitely many prime ideals we have $m_K$ divides $N(\mathfrak{p})-1$. The relation between the prime factors of $m_K$ and $d$ will allow to reduce more or less $r(N(\mathfrak{p})-1,d)$ and $r(N(\mathfrak{p})+1,d)$, leading to cases where $\Per_{\inf}(T_d,K)$ is zero and others where $\Per_{\inf}(T_d,K)$ is positive. 

In order to state the main theorem, recall that the radical of a number $d$, denoted $\rad(d)$, is the product of the prime divisors of $d$. Therefore, note that when we say that $\rad(d) \mid \rad(m)$, this means that all prime divisors of $d$ also divide $m$. Contrarily, when we say $\rad(d) \nmid \rad(m)$, this means that there exists one prime $p$ such that $p \mid d$ and $p \nmid m$. Given a prime $p$, the valuation of $m$ at $p$ will be denoted $v_p(m)$.

\begin{Theorem}
Let $K$ be a number field and let $m_K$ denote the greatest positive number such that $K$ contains a $m_K$th primitive root of unity. Let $f$ be a polynomial with coefficients in $K$ and degree $d$.

\begin{enumerate}
\item If $d = 0$ then $$\Per_{\inf}(f,K) = 0.$$

\item If $d = 1$ then $$\Per_{\inf}(f,K) = 1.$$

\item If $d \geq 2$ and $f$ is not linearly conjugate over $\CC$ to $\pm T_d$, then $$\Per_{\inf}(f,K) = 0.$$

\item If $d \geq 2$ and $f$ is linearly conjugate over $\CC$ to $\pm T_d$, then $$\Per_{\inf}(f,K) = \Per_{\inf}(T_d,K).$$

\item If $d$ has at least two prime factors and $\rad(d) \nmid \rad(m_K)$, then $$\Per_{\inf}(T_d,K) = 0.$$

\item If $d$ is odd, has at least two prime factors and $\rad(d) \mid \rad(m_K)$, then $$\Per_{\inf}(T_d,K) = \frac{1}{2}.$$  

\item If $d$ is even, has at least two prime factors, $\rad(d) \mid \rad(m_K)$ and $v_2(m_K) \geq 2$, then $$\Per_{\inf}(T_d,K) = \frac{1}{4}.$$

\item If $d$ is even, has at least two prime factors, $\rad(d) \mid \rad(m_K)$ and $v_2(m_K) = 1$, then $$\Per_{\inf}(T_d,K) = 0.$$

\item If $d$ is an odd prime power, then $$\Per_{\inf}(T_d,K) = \frac{1}{2}.$$ 

\item If $d$ is an even prime power, then $$\Per_{\inf}(T_d,K) = \frac{1}{4}.$$ 
\end{enumerate}
\label{Theorem: main result clasification}
\end{Theorem}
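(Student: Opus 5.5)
The plan is to treat the ten items in three groups: the trivial degrees, the reduction to Chebyshev polynomials, and the arithmetic computation of $\Per_{\inf}(T_d,K)$.

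\textbf{Trivial degrees.} Items (1) and (2) are direct and are what \cref{lemma: case degree 1} handles.
\begin{itemize}
\item[(1)] A constant map has exactly one periodic point, so the proportion is $1/N(\mathfrak{p})\to 0$.
\item[(2)] For a linear map $ax+b$ with $a\neq 0$, the reduction is a bijection of $\FF_\mathfrak{p}$ for almost all $\mathfrak{p}$. Hence every point is periodic and the proportion is $1$.
\end{itemize}

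\textbf{Reduction to Chebyshev.} Item (3) follows from the inequality \cref{equation: upper bound PerInf FPP intro}, $\Per_{\inf}(f,K)\le \FPP(G_\infty(\CC,f,t))$, together with \cite[Theorem 2]{FariñaRadi2026FPP}, which gives $\FPP=0$ when $f$ is not linearly conjugate over $\CC$ to $\pm T_d$.

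For item (4), the first step is to show that the conjugating map $\ell$ with $\ell^{-1}\circ f\circ \ell=\pm T_d$ is defined over $K$ or over a quadratic extension $K'=K(\sqrt{a})$.
\begin{itemize}
\item If $\ell$ is defined over $K$, then for almost all $\mathfrak{p}$ the reductions are conjugate over $\FF_\mathfrak{p}$. The case $-T_d$ is absorbed using $-T_d(-x)=T_d(x)$ when $d$ is odd, and a similar twist when $d$ is even.
\item If $\ell$ is only defined over $K'$, there are two kinds of primes.
\begin{itemize}
\item[$\circ$] For primes split in $K'$, the conjugacy again descends to $\FF_\mathfrak{p}$.
\item[$\circ$] For primes inert in $K'$, $f_\mathfrak{p}$ on $\FF_\mathfrak{p}$ corresponds to a twisted form of $T_d$, namely the restriction of $T_d$ to an $\FF_\mathfrak{p}$-form inside $\FF_{\mathfrak{p}^2}$. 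Its periodic points are still counted through the parametrization $x\mapsto x+1/x$, now using $\mu_{N+1}$ and $\mu_{N-1}$ with their roles interchanged or twisted by $\sqrt a$.
\end{itemize}
\end{itemize}
The key check is that the periodic count for $f_\mathfrak{p}$ is still $\tfrac12\bigl(r(N-1,d)+r(N+1,d)\bigr)+O(1)$. This is what \cref{proposition: Perinf f with a not in K} provides, and it yields $\Per_{\inf}(f,K)=\Per_{\inf}(T_d,K)$.

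\textbf{Computing $\Per_{\inf}(T_d,K)$.} Next I would establish the formula
\begin{align*}
\Per_{\inf}(T_d,K)=\tfrac12\liminf_{N(\mathfrak p)\to\infty}\frac{r(N(\mathfrak p)-1,d)+r(N(\mathfrak p)+1,d)}{N(\mathfrak p)}.
\end{align*}
Write $x=z+1/z$ with $z\in\mu_{N-1}\cup\mu_{N+1}$. Then $T_d$ acts as $z\mapsto z^d$ on these cyclic groups. A point $z$ is periodic iff its order is coprime to $d$, which gives $r(N\mp1,d)$ periodic $z$'s, and the two-to-one map $z\mapsto z+1/z$ halves the count.

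Since $r\le$ the argument, the liminf is at most $1$. The analysis then splits according to the primes of $d$.
\begin{itemize}
\item \textbf{Prime power} $d=p^k$ (items (9), (10)). For $p$ odd, $N-1$ and $N+1$ cannot both be divisible by $p$. So one of the two terms is about $N$, giving the lower bound $1/2$. Chebotarev with $\mathfrak p$ splitting completely in $K(\zeta_{p^n})$ makes the other term $\le N/p^n$, so the value is exactly $1/2$. For $p=2$, both $N\pm1$ are even and one is $\equiv 2 \pmod 4$, so one term is $N/2$ and the value is $1/4$. The limit is reached by primes with $2^n\mid N-1$, which exist by Chebotarev.
\item \textbf{Composite $d$, $\rad(d)\nmid\rad(m_K)$} (item (5)). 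Pick $p\mid d$ with $\zeta_p\notin K$ and $q\mid d$ another prime. Use Chebotarev in $K(\zeta_{p^n},\zeta_{q^n})$ to find Frobenius acting on $\zeta_{p^n}$ by an element of order $2$, giving $N\equiv -1 \pmod{p^n}$, while acting trivially on $\zeta_{q^n}$, giving $N\equiv1 \pmod{q^n}$. Both terms are then small.
\end{itemize}

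\textbf{Main obstacle.} The hardest step is making this Chebotarev construction compatible: I need the Galois group of $K(\zeta_{p^n})/K$ to contain an element acting as $-1$ on $\mu_{p^n}$. Because $\zeta_p\notin K$, that group is a large open subgroup of $(\ZZ/p^n)^\times$. For odd $p$ it is cyclic with image not containing $\mu_p$-triviality, so it contains $-1$. The case $p=2$ needs separate care with $\sqrt{-1}$, and linear disjointness of the $p$ and $q$ towers over $K$ holds up to a finite level.

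\textbf{Composite $d$, $\rad(d)\mid\rad(m_K)$} (items (6), (7), (8)).
\begin{itemize}
\item Every odd $p\mid d$ divides $N-1$, so $p\nmid N+1$ and the $N+1$ term is about $N$ for odd $d$. This gives the lower bound $1/2$ in item (6), achieved by splitting completely in $K(\zeta_{d^n})$.
\item For even $d$ with $4\mid m_K$, $N+1\equiv 2\pmod 4$, which gives $1/4$ in item (7).
\item If $v_2(m_K)=1$, Chebotarev lets Frobenius act as $-1$ on $\zeta_{2^n}$ and trivially on the odd parts. Then $2^n\mid N+1$ and $p^n\mid N-1$, so both terms vanish asymptotically, giving item (8).
\end{itemize}
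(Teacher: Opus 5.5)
\textbf{Verdict: genuine gap in items (5) and (8), which in fact cannot be repaired.} Your plan follows the paper's proof in outline. It handles (1)--(3) the same way, reduces (4) to the split/inert count of \cref{proposition: Perinf f with a not in K}, and then uses $\Per_{\inf}(T_d,K)=\tfrac12\liminf\bigl(r(N-1,d)+r(N+1,d)\bigr)/N$, with $N=N(\mathfrak p)$, together with Chebotarev. Items (6), (7), (9), (10) only need the trivial Frobenius class, i.e.\ primes splitting completely in $K(\zeta_{p^n})$, and there your argument is fine.

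The gap is the step you flagged in (5), and its analogue in (8). You claim that $\Gal(K(\zeta_{p^n})/K)$ contains $-1$ whenever $p$ is odd and $\zeta_p\notin K$; this is false. A subgroup $H$ of the cyclic group $(\ZZ/p^n\ZZ)^\times$ contains $-1$ iff $\#H$ is even, whereas $\zeta_p\notin K$ only says $H\not\subseteq 1+p\ZZ$. Take $K=\QQ(\sqrt{-7})\subset\QQ(\zeta_7)$. Since $\sqrt{-7}\in K$ is a quadratic Gauss sum, Frobenius at $\mathfrak p$ multiplies it by $\left(\frac{N}{7}\right)$. So $N$ is always a square mod $7$, never $\equiv -1 \pmod 7$, and $H$ (the squares) has odd order $3\cdot 7^{n-1}$. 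For $d=14$ the conclusion of (5) survives only after swapping roles ($7^n\mid N-1$, $2^n\mid N+1$). The failure of linear disjointness ``up to a finite level'' that you set aside is exactly what can forbid the joint condition on $\mu_{p^n}$ and $\mu_{q^n}$.

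This cannot be patched, because (5) and (8) are false as stated.
\begin{itemize}
\item Counterexample to (5): take $K=\QQ(\sqrt{21})$, so $m_K=2$, and $d=21$. Frobenius fixes $\sqrt{-3}\,\sqrt{-7}=-\sqrt{21}$, so $\left(\frac{N}{3}\right)=\left(\frac{N}{7}\right)$ for every $\mathfrak p\nmid 42$. If $3\mid N-1$, then $\left(\frac{N}{7}\right)=1\neq\left(\frac{-1}{7}\right)$, so $\gcd(N+1,21)=1$ and $r(N+1,21)=N+1$. Otherwise $3\mid N+1$ and $\left(\frac{N}{7}\right)=-1$, so $r(N-1,21)=N-1$. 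Hence $\Per_{\inf}(T_{21},K)=\tfrac12$, not $0$.
\item Counterexample to (8): take $K=\QQ(\sqrt{-3},\sqrt{-2})$, so $m_K=6$. Frobenius fixes $\sqrt{-2}=\zeta_8+\zeta_8^3$, so $N\equiv 1,3\pmod 8$ and $N\equiv 1\pmod 3$. Then $r(N+1,6)\ge (N+1)/4$, and in fact $\Per_{\inf}(T_6,K)=\tfrac18$, a value not even on the paper's list.
\end{itemize}

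The paper's own proof breaks at the same point. \cref{lemma: Galois group cyclotomic extension} asserts that $\Gal(K(\xi_n)/K)$ is all of $\{r\equiv 1 \bmod m_K\}$ ``by maximality of $m_K$''. This tacitly assumes $K\cap\QQ(\xi_{\lcm(n,m_K)})=\QQ(\xi_{m_K})$, which already fails for $K=\QQ(\sqrt5)$, $n=5$. Consequently \cref{proposition: cebotarev in my case}, and cases 6.1 and 6.4 of the last section, inherit the error.

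Both your argument and the paper's are correct under the extra hypothesis $K\cap\QQ^{\mathrm{ab}}=\QQ(\xi_{m_K})$, where $\QQ^{\mathrm{ab}}$ is the union of all cyclotomic fields. This covers $K=\QQ$, so the Corollary is unaffected. In general, for $d$ with at least two prime factors, $\Per_{\inf}(T_d,K)=0$ iff there are distinct primes $p,q\mid d$ and some $\sigma\in\Gal(\overline{\QQ}/K)$ acting trivially on $\mu_{p^\infty}$ and by inversion on $\mu_{q^\infty}$.
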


In the case that $K = \QQ$ then $m_K = 2$ and only the cases (1), (2), (3), (4), (5), (9) and (10) apply. Moreover, the results obtained in \cref{Theorem: main result clasification} for the case $K = \QQ$ are consistent with \cref{equation: Per in chebyshev over Q}. 

\begin{Corollary}
If $f$ is a polynomial with coefficients in $\QQ$ and degree $d \geq 1$. Then 
\begin{align*}
\Per_{\inf}(f,\QQ) = \left \{ \begin{matrix} 
1 & \mbox{if $d = 1$,} \\ 
1/2 & \mbox{if $d$ odd prime power and $f$ conjugate  over $\CC$ to $\pm T_d$,} \\ 
1/4 & \mbox{if $d$ even prime power and $f$ conjugate over $\CC$ to $\pm T_d$ ,} \\
0 & \mbox{otherwise.}
\end{matrix}\right.
\label{equation: Per in chebyshev over Q}
\end{align*} 
\label{Corollary: main result case Q}
\end{Corollary}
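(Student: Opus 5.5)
The plan is to deduce the corollary directly from \cref{Theorem: main result clasification} applied with $K=\QQ$. The only roots of unity in $\QQ$ are $\pm 1$, so $m_\QQ = 2$, $\rad(m_\QQ)=2$ and $v_2(m_\QQ)=1$. First I dispose of the non-Chebyshev cases. If $d=1$, item (2) gives $\Per_{\inf}(f,\QQ)=1$. If $d\geq 2$ and $f$ is not linearly conjugate over $\CC$ to $\pm T_d$, item (3) gives $0$, which falls under ``otherwise''.

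Next, suppose $d\geq 2$ and $f$ is linearly conjugate over $\CC$ to $\pm T_d$. Item (4) reduces the problem to computing $\Per_{\inf}(T_d,\QQ)$, and I split on the shape of $d$.
\begin{itemize}
\item If $d$ is an odd prime power, item (9) gives $1/2$.
\item If $d$ is an even prime power, that is $d=2^k$, item (10) gives $1/4$.
\item If $d$ has at least two distinct prime factors, then $d$ has an odd prime factor $p$. Since $p\nmid 2 = \rad(m_\QQ)$, we have $\rad(d)\nmid\rad(m_\QQ)$, so item (5) gives $0$.
\end{itemize}
Thus items (6), (7) and (8) never occur over $\QQ$. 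This is consistent with the remark after the theorem: item (8) would need $\rad(d)\mid 2$, but $d$ has two prime factors.

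Collecting the cases gives exactly the displayed table. The degree-$0$ case is excluded by the hypothesis $d\geq 1$. A nonconstant polynomial is not ``conjugate to $\pm T_d$'' unless $d\geq 2$, so the $d=1$ row does not overlap with the Chebyshev rows. The formula also agrees with \cref{equation: Per in chebyshev over Q}, which serves as a sanity check.

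There is no real obstacle, since the corollary is a bookkeeping consequence of the theorem. The one point requiring care is the claim $m_\QQ=2$, which holds because a primitive $m$th root of unity lies in $\QQ$ only if $\varphi(m)=1$, that is $m\in\{1,2\}$.
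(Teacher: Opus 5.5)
Your proposal is correct and follows the paper's own argument: specialize \cref{Theorem: main result clasification} to $K=\QQ$ with $m_\QQ=2$, so that only items (1)--(5), (9), (10) can occur and any $d$ with two distinct prime factors falls under item (5). One small slip: the claim that a degree-one polynomial cannot be conjugate to $\pm T_d$ is not literally true, since $T_1(x)=x$. The rows still do not overlap, because $1$ is not a prime power and the Chebyshev items of the theorem are stated only for $d\geq 2$.
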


Although \cref{equation: upper bound PerInf FPP intro} also holds in the case of rational functions, there is not a complete classification of the fixed-point proportion of $G_\infty(\CC,f,t)$ in the case of rational maps. In \cite{FariñaRadi2026FPP}, the authors proved that $\FPP(G_\infty(\CC,f,t)) = 0$ for rational maps whose critically exceptional set does not have more than one element (see \cite[Theorem 6]{FariñaRadi2026FPP}) and conjectured that the cases of positive fixed-point proportion should come from rational maps with euclidean orbifold of type different to $(\infty, \infty)$ (see \cite[Conjecture 7]{FariñaRadi2026FPP}). This latter family includes Lattès maps $L$, although the value of $\FPP(G_\infty(\CC,L,t))$ is not known for any of them. Regarding $\Per_{\inf}(L,K)$, authors in \cite[Example 7.3]{Juul2014} proved that in many cases, Lattès maps $L$ induced by multiplication by a prime $\ell$ in a elliptic curve defined over $\QQ$ must have $\Per_{\inf}(L,K) = 0$. Unlike Chebyshev polynomials, there are not known cases of Lattès maps such that $\Per_{\inf}(L,K) > 0$. Therefore, it still remains open a whole classification in the case of rational functions:

\begin{Question}
What are the values of $\Per_{\inf}(f,K)$ for every number field $K$ and every rational function $f \in K(x)$?
\end{Question}

\section{Preliminaries}

\subsection{Iteration of maps and local degree}

Given a set $S$ and a function $f: S \rightarrow S$, we use the notation $f^n$ to refer to the $n$th composition of $f$ with itself. By convention, $f^0$ equals the identity map in $S$.

Given $\alpha \in S$, we say that $\alpha$ is \textit{preperiodic} if there exist $m > n \geq 0$ such that $f^n(\alpha) = f^m(\alpha)$. By the pigeonhole principle, if $S$ is finite every point is preperiodic. We say that $\alpha$ is \textit{periodic} if $n$ can be taken zero in the definition of preperiodic. We denote 
$$\Per(f,S) := \set{\alpha \in S: \text{ $\alpha$ is periodic for $f$}}.$$ We say that a point $\alpha$ is \textit{strictly preperiodic} if it is PREperiodic but not periodic.

\begin{Definition}
Let $F$ be a field, $\overline{F}$ an algebraic closure of $F$ and $f,g \in \overline{F}(x)$. We say that $f$ and $g$ are \textit{linearly conjugate} over $F$ if there exists $L \in F(x)$ of degree $1$ such that $g = L^{-1} \circ f \circ L$.  
\end{Definition}

Given $\alpha \in \PP^1(F)$ and $f \in F(x)$, the \textit{local degree} of $\alpha$ at $f$ is $$e_f(\alpha) := \min \set{k \geq 1: f^{(k)}(\alpha) \neq 0}$$ where $f^{(k)}$ corresponds to the $k$th derivative of $f$. We say that $\alpha$ is a \textit{critical point} for $f$ if $e_f(\alpha) > 1$. We say that $\alpha$ is \textit{totally ramified} for $f$ if $e_f(\alpha) = \deg(f)$. If $f$ is a polynomial, then $\infty$ is a totally ramified point. 

\begin{lemma}
If $F$ is a field, we have polynomials $f,g \in F[x]$ linearly conjugate over $\overline{F}$ by a function $L$ and $f$ has only $\infty$ as a totally ramified point, then $L$ is a polynomial. 
\label{lemma: L is a polynomial}
\end{lemma}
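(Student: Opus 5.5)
We argue through the action of $L$ on totally ramified points. Write $L(x) = \frac{ax+b}{cx+e}$ with $ae-bc \neq 0$ and coefficients in $\overline{F}$, so that $g = L^{-1}\circ f \circ L$, that is, $L \circ g = f \circ L$ as maps on $\PP^1(\overline{F})$. It suffices to show that $L(\infty) = \infty$. Indeed, if $c \neq 0$ then $L(\infty) = a/c \neq \infty$, so $L(\infty)=\infty$ forces $c = 0$, and then $L(x) = (a/e)x + b/e$ is a polynomial of degree $1$.

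To get $L(\infty)=\infty$, we compare local degrees on both sides of $L\circ g = f\circ L$ using the chain rule for local degrees, $e_{h_1\circ h_2}(\alpha) = e_{h_2}(\alpha)\, e_{h_1}(h_2(\alpha))$. Here we use the standard fact that local degrees are multiplicative under composition, with $e_L \equiv 1$ everywhere since $L$ is a Möbius map. At the point $\infty$ this gives
\[
e_g(\infty) = e_{L\circ g}(\infty) = e_{f\circ L}(\infty) = e_f(L(\infty)).
\]
Since $g$ is a polynomial of degree $d=\deg f = \deg g$, the point $\infty$ is totally ramified for $g$, so $e_g(\infty)=d$. Hence $e_f(L(\infty)) = d$, meaning $L(\infty)$ is totally ramified for $f$. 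By hypothesis the only such point is $\infty$, so $L(\infty)=\infty$.

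The one point needing care is the use of the derivative definition of $e_f$ given in the paper. In positive characteristic, $\min\{k : f^{(k)}(\alpha)\neq 0\}$ can differ from the ramification index, for example for inseparable maps. It is also unclear how the definition is meant to apply at $\alpha=\infty$. I would therefore interpret $e_f(\alpha)$ as the ramification index: the order of vanishing of $f(x)-f(\alpha)$ at $\alpha$, computed in a local coordinate, using $1/x$ at $\infty$. Under that interpretation the multiplicativity is immediate from composing local power series. Alternatively, if the paper's intended reading is characteristic zero (which is the case used later, over $\CC$), the derivative definition agrees with the ramification index and no issue arises.

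A fallback avoiding this subtlety entirely is a direct computation. If $c\neq 0$, then $f(L(x))$ has a pole only at $x=-e/c$, while $L(g(x))$ has poles at the roots of $c\,g(x)+e$. Equality of the two sides forces $c\,g(x)+e$ to have the single root $-e/c$ with multiplicity $d$. Then $g(x)+e/c$ is a constant multiple of $(x+e/c)^d$, which makes $-e/c$ totally ramified for $g$ as a finite point. Pushing this through the conjugacy shows that $f$ has a finite totally ramified point, contradicting the hypothesis.
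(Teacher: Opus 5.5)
Your proof is correct and is essentially the paper's: both apply multiplicativity of local degrees with $e_L \equiv 1$ at $\infty$ to get $\deg f = e_g(\infty) = e_f(L(\infty))$, so $L(\infty)$ is totally ramified for $f$ and hence equals $\infty$ (the paper phrases this as a contradiction with $L(\infty)\neq\infty$), and your caveat about the derivative definition of $e_f$ in positive characteristic and at $\infty$ is fair, since the paper does not address it. In your optional fallback the last step is off, because $L$ sends the finite totally ramified point $-e/c$ of $g$ to $\infty$; to obtain a finite totally ramified point of $f$, rerun the same pole computation on $g \circ L^{-1} = L^{-1} \circ f$, which shows that $f(x) - a/c$ is a constant multiple of $(x - a/c)^d$.
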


\begin{proof}
Write $g = L^{-1} \circ f \circ L$ and suppose that $L$ is not a polynomial. Then $L(\infty) \neq \infty$. By multiplicativity of the local degree and the fact that $e_L(\alpha) = 1$ for every $\alpha \in \PP^1(F)$, then $$\deg(f) = e_g(\infty) = e_{L^{-1}}(f(L(\infty))) \cdot e_f(L(\infty)) \cdot e_L(\infty) = e_f(L(\infty)),$$ which implies that $L(\infty)$ is totally ramified for $f$ giving a contradiction.
\end{proof}

\subsection{Algebraic number theory}

In this subsection we summarize some results from algebraic number theory that we will use in the article. Choose $\iota$ an embedding of $\Q-$ in $\CC$ and given $n \geq 1$, define $\xi_n$ in $\Q-$ by $$\iota(\xi_n) = e^{\frac{2 \pi i}{n}}.$$

By a slight abuse of notation, we will write $\xi_n = e^{\frac{2 \pi i}{n}}$ with no emphasis in the embedding $\iota$.

\begin{lemma}
Let $K$ be a number field, $n,m \geq 1$ and suppose that $K$ contains primitive roots of unity of order $m$ and $n$. Then $K$ contains $\xi_{\lcm(m,n)}$.
\label{lemma: composition of roots of unity}
\end{lemma}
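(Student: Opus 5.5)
The plan is to build $\xi_{\lcm(m,n)}$ as a product of suitable powers of the two given primitive roots of unity, using Bézout's identity for $m$ and $n$.

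First, we reduce to the specific roots $\xi_m$ and $\xi_n$. If $\zeta\in K$ is a primitive $m$th root of unity, then every $m$th root of unity is a power of $\zeta$, because the $m$th roots of unity form a cyclic group of order $m$ generated by $\zeta$. In particular $\xi_m=\zeta^k$ for some $k$, so $\xi_m\in K$. In the same way $\xi_n\in K$.

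Next, let $g=\gcd(m,n)$ and $\ell=\lcm(m,n)=mn/g$. By Bézout there are integers $a,b$ with $am+bn=g$. Dividing by $mn$ gives
\begin{align*}
\frac{a}{n}+\frac{b}{m}=\frac{g}{mn}=\frac{1}{\ell}.
\end{align*}
Through the fixed embedding $\iota$ we then compute
\begin{align*}
\xi_n^{a}\,\xi_m^{b}=e^{2\pi i\left(\frac{a}{n}+\frac{b}{m}\right)}=e^{\frac{2\pi i}{\ell}}.
\end{align*}
Since $\iota$ is injective, this shows $\xi_n^a\xi_m^b=\xi_\ell$ in $\Q-$. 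The left-hand side lies in $K$, because $K$ is a field containing $\xi_m$ and $\xi_n$, and it is closed under integer powers (including negative ones) and products.

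There is no real obstacle here. The only care needed is in the first step: we must pass from ``some primitive root'' to the specific $\xi_m$ and $\xi_n$, so that the identity computed through $\iota$ applies.
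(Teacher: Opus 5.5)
Your proposal is correct and follows essentially the same route as the paper: first reduce to $\xi_m,\xi_n\in K$ by taking powers, then use Bézout's identity $am+bn=\gcd(m,n)$ to write $\xi_{\lcm(m,n)}$ as a product of integer powers of $\xi_m$ and $\xi_n$. Your version also spells out why the reduction step works (the $m$th roots of unity form a cyclic group generated by any primitive one), which the paper leaves implicit.
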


\begin{proof}
Taking powers if necessary, we may assume that $\xi_m$ and $\xi_n$ are in $K$.

By Bezout's theorem there exist $a,b \in \ZZ$ such that $am+bn = \gcd(m,n)$. Then
\begin{equation*}
\xi_m^b \xi_n^a = e^{2 \pi i \left(\frac{b}{m} + \frac{a}{n} \right)} = e^{2 \pi i \left(\frac{am+bn}{mn} \right)} = e^{2 \pi i \left(\frac{\gcd(m,n)}{mn} \right)} = e^{2 \pi i \left(\frac{1}{\lcm(m,n)} \right)}. \qedhere
\end{equation*}
\end{proof}

As $K$ is a number field, then $K$ is a finite extension of $\QQ$ and therefore \cref{lemma: composition of roots of unity} justifies the existence of a maximal number $m_K \geq 1$ such that $K$ contains $\xi_{m_K}$.

\begin{lemma}
\label{lemma: Galois group cyclotomic extension}
Let $n \geq 1$ and $K$ a number field. Then 
\begin{align*}
\Gal \left( K(\xi_n)/K \right) \cong \set{r \in (\ZZ/\lcm(n,m_K)\ZZ)^\times: r \equiv 1 \pmod{m_K}}.
\end{align*}
\end{lemma}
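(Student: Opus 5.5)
We have to show that $\Gal(K(\xi_n)/K)$ is isomorphic to the subgroup of $(\ZZ/\lcm(n,m_K)\ZZ)^\times$ of classes congruent to $1$ modulo $m_K$. The plan is to pass to the bigger extension $K(\xi_N)$ with $N := \lcm(n,m_K)$. Since $\xi_{m_K}\in K$, \cref{lemma: composition of roots of unity} applied inside $K(\xi_n)$ shows that $K(\xi_n)$ contains $\xi_N$. Conversely, $\xi_n$ is a power of $\xi_N$. Hence $K(\xi_n) = K(\xi_N)$, and it suffices to compute $\Gal(K(\xi_N)/K)$ with $m_K \mid N$.

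Next we use the standard cyclotomic character. Any $\sigma \in \Gal(K(\xi_N)/K)$ sends $\xi_N$ to another primitive $N$th root of unity, so $\sigma(\xi_N) = \xi_N^{r_\sigma}$ for a unique $r_\sigma \in (\ZZ/N\ZZ)^\times$. The map $\sigma \mapsto r_\sigma$ is an injective group homomorphism. Because $\sigma$ fixes $K \ni \xi_{m_K} = \xi_N^{N/m_K}$, we get $\xi_{m_K}^{r_\sigma} = \xi_{m_K}$, that is, $r_\sigma \equiv 1 \pmod{m_K}$. So the image lies in the subgroup $H := \set{r \in (\ZZ/N\ZZ)^\times : r \equiv 1 \pmod{m_K}}$.

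The main step is surjectivity onto $H$. The plan is to compare degrees rather than construct automorphisms. Write $F := K \cap \QQ(\xi_N)$. Since $\QQ(\xi_N)/\QQ$ is Galois, restriction gives an isomorphism
$$\Gal(K(\xi_N)/K) \cong \Gal(\QQ(\xi_N)/F),$$
compatible with the identification $\Gal(\QQ(\xi_N)/\QQ) \cong (\ZZ/N\ZZ)^\times$. The subfield $F$ is an abelian extension of $\QQ$ contained in $\QQ(\xi_N)$, and it contains $\QQ(\xi_{m_K})$. The claim we need is $F = \QQ(\xi_{m_K})$. Granting it, the image of the cyclotomic character is exactly the subgroup fixing $\xi_{m_K}$, which is $H$.

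This claim is the step I expect to be the real obstacle. The maximality of $m_K$ alone only says that $K$ contains no root of unity of order larger than $m_K$. It does not immediately prevent $F$ from being some larger abelian field, for example containing $\sqrt{2}$ inside $\QQ(\xi_8)$ while $\xi_8 \notin K$. In such a case the image of the character could be a proper subgroup of $H$, so the statement as written is suspicious in general.

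I would therefore first check whether the claim can fail, and I expect it can. Take $K = \QQ(\sqrt{2})$: then $m_K = 2$, but $[K(\xi_8):K] = 2 < 4 = \# H$. So I would expect the lemma to hold as stated only when $K \cap \QQ(\xi_N) = \QQ(\xi_{m_K})$. Examples are $K = \QQ$ or $K$ itself cyclotomic, and in those cases the degree argument above completes the proof. In general, what the proof establishes is the injection into $H$ given above, and I anticipate the paper's later arguments only need that injection, possibly together with a finite-index statement.
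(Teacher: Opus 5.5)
Your analysis is correct, and the error it exposes is in the paper's proof, not in your argument. The statement is false as written. For $K=\QQ(\sqrt2)$ and $n=8$ one has $m_K=2$ and $\lcm(n,m_K)=8$, so the right-hand side is all of $(\ZZ/8\ZZ)^\times$, of order $4$. On the other hand, $K(\xi_8)=\QQ(i,\sqrt2)$ has degree $2$ over $K$.

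The paper's proof is exactly your first two steps:
\begin{enumerate}
\item reduce to $N=\lcm(n,m_K)$ via \cref{lemma: composition of roots of unity};
\item observe that the cyclotomic character lands in the classes $r\equiv 1 \pmod{m_K}$.
\end{enumerate}
It then closes with ``By maximality of $m_K$, we obtain the bijection.'' That sentence is precisely the unjustified surjectivity step you flagged. Maximality of $m_K$ only says that the roots of unity in $K$ are the $m_K$th ones; it does not give $K\cap\QQ(\xi_N)=\QQ(\xi_{m_K})$. What is actually true is your version: $\Gal(K(\xi_n)/K)\cong\Gal(\QQ(\xi_N)/K\cap\QQ(\xi_N))$. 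This is a subgroup of $H$, and it equals $H$ if and only if $K\cap\QQ(\xi_N)=\QQ(\xi_{m_K})$.

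Your closing guess is wrong, however: the paper does use surjectivity. \cref{proposition: cebotarev in my case} takes an arbitrary integer solution $r$ of the congruence system and uses this lemma to realize it as some $\sigma\in\Gal(K(\xi_n)/K)$ before applying Chebotarev. So that proposition fails too. In $\QQ(\sqrt2)$, every prime not above $2$ has norm $\equiv\pm1\pmod 8$, even though $x\equiv1\pmod 2$, $x\equiv3\pmod 8$ is solvable.

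The failure reaches the main theorem. Take $K=\QQ(i,\sqrt{-7})$, so $m_K=4$, and $d=14$. This falls under item (5) of \cref{Theorem: main result clasification}, which asserts $\Per_{\inf}(T_{14},K)=0$. But for every $\mathfrak{p}\nmid 14$:
\begin{enumerate}
\item $N(\mathfrak{p})\equiv1\pmod 4$, since $i\in K$;
\item $N(\mathfrak{p})$ is a square modulo $7$, since $\Frob_\mathfrak{p}$, acting by $\xi_7\mapsto\xi_7^{N(\mathfrak{p})}$, must fix $\sqrt{-7}\in K$.
\end{enumerate}
Hence $v_2(N(\mathfrak{p})+1)=1$ and $7\nmid N(\mathfrak{p})+1$, so $r(N(\mathfrak{p})+1,14)=(N(\mathfrak{p})+1)/2$. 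Then \cref{lemma: number of Per for Td} gives $\Per_{\inf}(T_{14},K)\geq 1/4$, and in fact equality holds. So in \cref{section: analysis of Perinf(Td)} the condition $x\equiv1\pmod{m_K}$ must be replaced throughout by membership in $\Gal(\QQ(\xi_N)/K\cap\QQ(\xi_N))$.
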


\begin{proof}
By \cref{lemma: composition of roots of unity} $K(\xi_n) = K(\xi_{\lcm(m_K,n)})$. If $\sigma \in \Gal(K(\xi_n)/K)$ then $$\sigma(\xi_{\lcm(m_K,n)}) = (\xi_{\lcm(m_K,n)})^{\varphi(\sigma)}$$ for some $\varphi(\sigma) \in (\ZZ/\lcm(n,m_K)\ZZ)^\times$. Moreover, if $\lcm(m_K,n)/c = m_K$ then 
\begin{align*}
\sigma(\xi_{m_K}) = \sigma((\xi_{\lcm(m_K,n)})^c) = (\xi_{\lcm(m_K,n)})^{\varphi(\sigma)c} = (\xi_{m_K})^{\varphi(\sigma)}.
\end{align*}
But $\sigma$ must fix $\xi_{m_K}$ so $\varphi(\sigma) \equiv 1 \pmod{m_k}$. By maximality of $m_K$, we obtain the bijection.
\end{proof}

The following result will be crucial in \cref{section: analysis of Perinf(Td)} and is based on Cebotarev's theorem:

\begin{proposition}
Let $K$ be a number field. Let $\set{n_i}_{i = 1}^k$ be a set of positive numbers and $\set{a_i}_{i = 1}^k$ another set of numbers such that $a_i \in (\ZZ/n_i\ZZ)^\times$. If the system of congruences 
\begin{align*}
\left \{ \begin{matrix} 
x &\equiv &1 \pmod{m_K} \\
x &\equiv &a_1 \pmod{n_1} \\
& \vdots &\\
x &\equiv &a_k \pmod{n_k} \\
\end{matrix}\right.
\end{align*} 
has a solution in $\ZZ$, then for infinitely many prime ideals $\mathfrak{p}$ in $\OK$ we have $$n_i \mid (N(\mathfrak{p})-a_i)$$ for all $i = 1, \dots, k$.
\label{proposition: cebotarev in my case}
\end{proposition}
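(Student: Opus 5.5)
The plan is to apply Cebotarev's density theorem to the cyclotomic extension $L = K(\xi_n)$ with $n = \lcm(n_1,\dots,n_k)$, after reducing the system of congruences to a single residue class modulo $N := \lcm(n, m_K)$.

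First, fix a solution $x_0 \in \ZZ$ of the system. The congruences $x \equiv a_i \pmod{n_i}$ with $a_i$ coprime to $n_i$ make $x_0$ coprime to each $n_i$. Together with $x \equiv 1 \pmod{m_K}$ this gives $\gcd(x_0, N) = 1$. Hence the class $r$ of $x_0$ lies in $(\ZZ/N\ZZ)^\times$ and satisfies $r \equiv 1 \pmod{m_K}$. By \cref{lemma: Galois group cyclotomic extension}, $r$ corresponds to an element $\sigma_r \in \Gal(K(\xi_n)/K)$, namely the automorphism with $\sigma_r(\xi_N) = \xi_N^{r}$; recall that $K(\xi_n) = K(\xi_N)$ by \cref{lemma: composition of roots of unity}.

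Next, compute the Frobenius of an unramified prime. Let $\mathfrak{p}$ be a prime of $\OK$ not dividing $N$, so that $\mathfrak{p}$ is unramified in $K(\xi_N)$. Its Frobenius is the unique automorphism $\sigma$ with $\sigma(\alpha) \equiv \alpha^{N(\mathfrak{p})} \pmod{\mathfrak{P}}$ for a prime $\mathfrak{P}$ above $\mathfrak{p}$. The $N$th roots of unity remain distinct modulo $\mathfrak{P}$ since $\mathfrak{p} \nmid N$. Therefore $\Frob_\mathfrak{p}(\xi_N) = \xi_N^{N(\mathfrak{p})}$, and under the identification of \cref{lemma: Galois group cyclotomic extension}, $\Frob_\mathfrak{p}$ corresponds to $N(\mathfrak{p}) \bmod N$. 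Since the extension is abelian, this does not depend on the choice of $\mathfrak{P}$.

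Finally, apply Cebotarev. The set of primes $\mathfrak{p}$ with $\Frob_\mathfrak{p} = \sigma_r$ has Dirichlet density $1/[K(\xi_N):K] > 0$, so it is infinite. For each such $\mathfrak{p}$ we get $N(\mathfrak{p}) \equiv x_0 \pmod N$. Because $n_i \mid N$, this gives $N(\mathfrak{p}) \equiv x_0 \equiv a_i \pmod{n_i}$ for every $i$, which is the claim.

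The only delicate point is the Frobenius computation: checking that $\Frob_\mathfrak{p}$ acts on $\xi_N$ by raising to $N(\mathfrak{p})$, and that the map $\sigma \mapsto \varphi(\sigma)$ of \cref{lemma: Galois group cyclotomic extension} is the same identification used there. The remaining steps are routine.
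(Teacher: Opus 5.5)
You follow the paper's proof essentially step for step: realize the class of a solution modulo $N=\lcm(n,m_K)$ as an element of $\Gal(K(\xi_N)/K)$, apply Cebotarev, use $\Frob_{\mathfrak p}(\xi_N)=\xi_N^{N(\mathfrak p)}$, and reduce modulo each $n_i$. The Frobenius computation you single out as delicate is correct. The genuine gap is in the step you treat as routine: that the class $r$ of $x_0$ corresponds to some $\sigma_r\in\Gal(K(\xi_N)/K)$.

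Restriction identifies $\Gal(K(\xi_N)/K)$ with $\Gal(\QQ(\xi_N)/K\cap\QQ(\xi_N))\subseteq(\ZZ/N\ZZ)^\times$. This subgroup lies inside $\{r\in(\ZZ/N\ZZ)^\times : r\equiv 1 \pmod{m_K}\}$, but equals it only when $K\cap\QQ(\xi_N)=\QQ(\xi_{m_K})$. Maximality of $m_K$ does not force this, because $K$ can contain subfields of cyclotomic fields that are not themselves cyclotomic. So $\sigma_r$ exists if and only if the automorphism $\xi_N\mapsto\xi_N^{x_0}$ of $\QQ(\xi_N)$ is trivial on $K\cap\QQ(\xi_N)$. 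The condition $x_0\equiv 1\pmod{m_K}$ is necessary for this but not sufficient. The surjectivity asserted in \cref{lemma: Galois group cyclotomic extension} (``by maximality of $m_K$, we obtain the bijection'') is exactly what fails, so the paper's proof has the same gap.

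In fact the statement is false as written. Take $K=\QQ(\sqrt5)$, which is real, so $m_K=2$. The system $x\equiv1\pmod 2$, $x\equiv2\pmod 5$ has the solution $x=7$. Since $\QQ(\sqrt5)\subset\QQ(\xi_5)$, every $\mathfrak p\nmid 5$ has Frobenius in $\Gal(\QQ(\xi_5)/\QQ(\sqrt5))=\{\pm1\}\subset(\ZZ/5\ZZ)^\times$, so $N(\mathfrak p)\equiv\pm1\pmod 5$. Concretely, rational primes $p\equiv\pm1\pmod 5$ split with $N(\mathfrak p)=p$, and primes $p\equiv\pm2\pmod 5$ are inert with $N(\mathfrak p)=p^2\equiv-1\pmod 5$. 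Hence no prime ideal satisfies $5\mid N(\mathfrak p)-2$. Correspondingly, $[K(\xi_5):K]=2$, while the set in the lemma has $4$ elements.

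What your argument, and the paper's, actually proves is the proposition under a stronger hypothesis: some solution $x_0$ of the congruences $x\equiv a_i\pmod{n_i}$ must induce an automorphism $\xi_N\mapsto\xi_N^{x_0}$ of $\QQ(\xi_N)$ that fixes $K\cap\QQ(\xi_N)$.

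This matters downstream. For $K=\QQ(\xi_{15}+\xi_{15}^{-1})$ one has $m_K=2$, but $N(\mathfrak p)\equiv\pm1\pmod{15}$ for all $\mathfrak p\nmid 15$. So the system used for part (5) of \cref{Theorem: main result clasification} with $d=15$ has no prime solutions. Indeed, there $r(N(\mathfrak p)-1,15)+r(N(\mathfrak p)+1,15)\ge N(\mathfrak p)-1$, which gives $\Per_{\inf}(T_{15},K)=1/2$ rather than $0$.
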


\begin{proof}
Let $r$ be a solution and write $n = \lcm(n_1, \dots, n_k)$. By the Chinese remainder theorem we have $r \in (\ZZ/\lcm(m_K,n)\ZZ)^\times$. By \cref{lemma: Galois group cyclotomic extension}, we have that $r$ corresponds to an element $\sigma \in \Gal(K(\xi_n)/K)$. Then, by Cebotarev's theorem, there exist infinitely many prime ideals $\mathfrak{p}$ in $\OK$ such that 
$$\Frob_\mathfrak{p}(\xi_{\lcm(m_K,n)}) = (\xi_{\lcm(m_K,n)})^r.$$ 

Write $n_i = \lcm(m_K,n)/c_i$. Then, 
\begin{align*}
\Frob_\mathfrak{p}(\xi_{n_i}) = \Frob_\mathfrak{p}((\xi_{\lcm(m_K,n)})^{c_i}) = (\xi_{\lcm(m_K,n)})^{r \cdot c_i} = (\xi_{n_i})^r = (\xi_{n_i})^{a_i}.
\end{align*}
On the other hand, $$\Frob_\mathfrak{p}(\xi_{n_i}) = (\xi_{n_i})^{N(\mathfrak{p})},$$ so combining both equations we obtain that $$N(\mathfrak{p}) \equiv a_i \pmod{n_i}$$ which implies that 
\begin{equation*}
n_i \mid (N(\mathfrak{p})-a_i). \qedhere
\end{equation*}
\end{proof}

\subsection{Chebyshev polynomials}

Given $d \geq 0$, a field $F$ and $\zeta \in \overline{F}$ we define the \textit{twisted Chebyshev polynomial} of degree $d$ with parameter $\zeta$ as 
\begin{align*}
T_{d,\zeta} \left( x + \frac{\zeta}{x} \right) := x^d + \frac{\zeta}{x^d}.
\end{align*}
Note that when $\zeta = 1$, this corresponds to the standard Chebyshev polynomial defined in \cref{equation: definition Chebyshev polynomial}.

When $d \geq 1$, then $\zeta$ must be a $(d-1)$th root of unity. Indeed, replacing $x$ by $1/x$ we have:
\begin{align*}
\frac{1}{x^d} + \zeta x^d = f \left( \frac{1}{x} + \zeta x \right) = f \left( \frac{\zeta}{\zeta x} + \zeta x \right) = (\zeta x)^d + \frac{\zeta}{(\zeta x)^d} = \zeta^d x^d + \frac{1}{\zeta^{d-1} x^d} 
\end{align*}
which yields $\zeta^{d-1} = 1$.

In \cite{Adams2025}, Adams considered twisted Chebyshev polynomials as another family of polynomials having iterated Galois groups with bad properties. However, these polynomials are just linearly conjugate to the standard Chebyshev polynomials:

\begin{lemma}
Let $F$ be a field, $F_p$ its prime field and $d \geq 1$. Then, the twisted Chebyshev polynomial $T_{d,\zeta}$ is linearly conjugate over $F_p(\sqrt{\zeta})$ to $\pm T_d$. Moreover, if $d$ is even $T_d$ and $-T_d$ are linearly conjugate over $F_p$.
\label{lemma: Chebyshev and twisted Chebyshev}
\end{lemma}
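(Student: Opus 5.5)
The plan is to conjugate by an explicit scaling. Fix $s\in\overline{F}$ with $s^2=\zeta$, so $s\in F_p(\sqrt{\zeta})$, and set $L(x)=sx$. Then $x+\zeta/x = s\,(x/s + s/x)$. Write $u=x/s$, so that $x = su$ and
\begin{align*}
x + \frac{\zeta}{x} = s\left(u+\frac{1}{u}\right).
\end{align*}
The right-hand side of the defining relation becomes
\begin{align*}
x^d + \frac{\zeta}{x^d} = s^d u^d + \frac{s^2}{s^d u^d} = s^d\left(u^d + s^{2-2d}u^{-d}\right) = s^d\left(u^d + \zeta^{1-d}u^{-d}\right).
\end{align*}
Since $\zeta^{d-1}=1$, this equals $s^d(u^d+u^{-d}) = s^d\,T_d(u+1/u)$. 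Comparing with the left-hand side $T_{d,\zeta}(s(u+1/u))$, and using that $u+1/u$ takes infinitely many values, we get the polynomial identity
\begin{align*}
T_{d,\zeta}(sy)=s^d\,T_d(y).
\end{align*}
Therefore $L^{-1}\circ T_{d,\zeta}\circ L(y) = s^{d-1}T_d(y)$.

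It remains to identify $s^{d-1}$. We have $(s^{d-1})^2=\zeta^{d-1}=1$, so $s^{d-1}=\pm1$ and $L^{-1}\circ T_{d,\zeta}\circ L=\pm T_d$. The coefficients of $L$ lie in $F_p(s)=F_p(\sqrt{\zeta})$, which gives the first claim. In characteristic $2$ the signs coincide and the argument still works.

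For the second claim, suppose $d$ is even and take $L(x)=-x$. The defining relation $T_d(u+1/u)=u^d+u^{-d}$ is invariant under $u\mapsto -u$ when $d$ is even, so $T_d(-y)=T_d(y)$ and $T_d$ is an even polynomial. Hence
\begin{align*}
L^{-1}\circ T_d\circ L(y) = -T_d(-y) = -T_d(y),
\end{align*}
and $L$ has coefficients in $F_p$.

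The only subtle point is the passage from the identity in $u$ to a polynomial identity. This is clear because $u\mapsto u+1/u$ has infinite image over $\overline{F}$, and two polynomials agreeing on infinitely many points are equal.
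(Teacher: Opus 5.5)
Your proof is correct and follows the paper's own argument: you conjugate by $L(x)=\sqrt{\zeta}\,x$ to get $L^{-1}\circ T_{d,\zeta}\circ L=(\sqrt{\zeta})^{d-1}T_d$, and for the second claim you conjugate by $L(x)=-x$, using that $T_d$ is even when $d$ is even. Your observation that $(s^{d-1})^2=\zeta^{d-1}=1$ fixes the sign more cleanly than the paper does, since the paper writes $\zeta$ as a power of a primitive $(d-1)$th root of unity and splits on the parity of $d$, which needs such a root to exist and needs the right choice of square root.
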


\begin{proof}
Let $\xi_{d-1}$ be a $(d-1)$th primitive root of unity. Then $\zeta = \xi_{d-1}^s$ for some $s \in \set{0, \dots, d-2}$. Consider $a$ such that $a^2 = \zeta$. If $d$ is even, then $a = \xi_{d-1}^t$ for some $t \in \set{0, \dots, d-2}$. This is because if $a^2 = \zeta$ then we need to solve $2t \equiv s \pmod{d-1}$ which has a solution since $2$ is invertible modulo $d-1$. In particular, $a^{d-1} = 1$. On the other hand, if $d$ is odd, then  $$a^{d-1} = \left((\xi_{d-1})^{\frac{d-1}{2}}\right)^s = (-1)^s.$$

Consider $L(x) = a x$. Then 
\begin{align*}
L^{-1} \circ T_{d, \zeta} \circ L \left( x + \frac{1}{x}  \right) &= L^{-1} \circ T_{d, \zeta} \left( a x + \frac{a^2}{a x}  \right) \\
&= L^{-1} \left( a^d x^d + \frac{\zeta}{a^d x^d}  \right) \\
&= a^{d-1} x^d + \frac{1}{a^{d-1} x^d}.
\end{align*}
If $d$ is even, we obtain $L^{-1} \circ T_{d, \zeta} \circ L = T_d$. If $d$ is odd, then $L^{-1} \circ T_{d, \zeta} \circ L = (-1)^s T_d$.  

Finally, observe that when $d$ is even, the maps $-T_d$ and $T_d$ are linearly conjugate by $L(x) = -x$.
\end{proof}

\section{Polynomials not linearly conjugate over $\CC$ to $\pm T_d$} 
\label{section: Polynomials not linearly conjugate over C to T_d}

We start with the points (1) and (2) in \cref{Theorem: main result clasification}. Although it is not of our interest in this article, note that the following proof also applies if $f$ is an invertible rational function of degree $1$.

\begin{lemma}
\label{lemma: case degree 1}
Let $K$ be a number field and $f \in K[x]$ a polynomial. 
\begin{enumerate}
\item If $f$ has degree $0$, then $$\Per_{\inf}(f,K) = 0.$$ 
\item If $f$ has degree $1$, then $$\Per_{\inf}(f,K) = 1.$$  
\end{enumerate}
\end{lemma}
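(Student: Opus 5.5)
Both parts follow from elementary properties of the reduced maps, computed prime by prime, and in each case the proportion for each large prime is pinned down, so the $\liminf$ can be read off. First I will fix notation: for all but finitely many primes $\mathfrak{p}$ of $\OK$, the coefficients of $f$ are $\mathfrak{p}$-integral. In the degree-$1$ case I also exclude the finitely many primes dividing the leading coefficient. Excluding finitely many primes does not affect the $\liminf$, since only finitely many prime ideals have norm below any given bound. For the remaining primes the reduction $f_\mathfrak{p}\colon \FF_\mathfrak{p}\to\FF_\mathfrak{p}$ is well defined.

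For part (1), $f=c$ is constant with $c\in K$. For every prime at which $c$ is integral, $f_\mathfrak{p}$ is the constant map $x\mapsto \bar c$. A point $\alpha$ is periodic if and only if $f_\mathfrak{p}^n(\alpha)=\alpha$ for some $n\geq 1$. Since $f_\mathfrak{p}^n(\alpha)=\bar c$ for all $n\geq1$, the only periodic point is $\bar c$, which is indeed fixed. Hence $\#\Per(f,\FF_\mathfrak{p})/N(\mathfrak{p})=1/N(\mathfrak{p})\to 0$, and so $\Per_{\inf}(f,K)=0$. Constants whose reduction might be undefined occur at only finitely many primes, so they do not matter.

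For part (2), write $f(x)=ax+b$ with $a\neq 0$. For every $\mathfrak{p}$ with $v_\mathfrak{p}(a)=0$ and $v_\mathfrak{p}(b)\geq 0$, which excludes only finitely many primes, the reduction $f_\mathfrak{p}(x)=\bar a x+\bar b$ has $\bar a\neq 0$. It is therefore a bijection of the finite set $\FF_\mathfrak{p}$, with inverse $\bar a^{-1}(x-\bar b)$. A bijection of a finite set is a permutation, so every point lies on a cycle. Concretely, the orbit of $\alpha$ must repeat, $f_\mathfrak{p}^n(\alpha)=f_\mathfrak{p}^m(\alpha)$ with $m>n$, and applying the inverse $n$ times gives $f_\mathfrak{p}^{m-n}(\alpha)=\alpha$. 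Thus the proportion of periodic points equals $1$ for all but finitely many $\mathfrak{p}$, and $\Per_{\inf}(f,K)=1$.

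There is no real obstacle here. The only point needing care is that the excluded primes (bad reduction, and vanishing of $\bar a$ in degree $1$) are finite in number, and that the $\liminf$ over $N(\mathfrak{p})\to\infty$ ignores finitely many primes. The same argument covers a degree-$1$ Möbius map acting on $\PP^1(\FF_\mathfrak{p})$, using good reduction, i.e. the resultant being a unit.
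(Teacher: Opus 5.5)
Your proof is correct and follows essentially the paper's argument: the constant reduction has exactly one periodic point, and in degree $1$ the reduction is a bijection of $\FF_\mathfrak{p}$, so every point is periodic. The only difference is cosmetic: you cancel with the inverse directly, whereas the paper derives a contradiction from a point having two distinct preimages. You also spell out the finitely many excluded primes, which the paper leaves implicit.
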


\begin{proof}
Let $\mathfrak{p}$ be a prime ideal in $\OK$ of good reduction for $f$. If $f$ has degree zero then $f_\mathfrak{p}(\alpha) = \alpha$ at only one point, so $$\Per_{\inf}(f,K) = 0.$$

If $f$ is a polynomial of degree $1$, then $f_\mathfrak{p}$ is invertible. Therefore, if $x_0$ is a strictly preperiodic point, there exist minimal numbers $n > m \geq 1$ such that $f_\mathfrak{p}^n(x_0) = f_\mathfrak{p}^m(x_0)$. But then, as $m$ and $n$ are minimal, this implies that $f_\mathfrak{p}^m(x_0)$ has two different preimages: $f_\mathfrak{p}^{m-1}(x_0)$ and $f_\mathfrak{p}^{n-1}(x_0)$, contradicting that $f_\mathfrak{p}$ is invertible. 

Therefore, every point is periodic and $\Per_{\inf}(f,K) = 1$.
\end{proof}

The next step is to prove point (3) in \cref{Theorem: main result clasification}, namely, that $\Per_{\inf}(f,K) = 0$ for every polynomial $f$ that is not linearly conjugate over $\CC$ to $\pm T_d$. As it was mentioned in the introduction, the proof is a simple corollary from the theory of iterated Galois groups and fixed-point proportion that we explain in the following.

Let $F$ be a field, $t$ transcendental over $F$ and fix $F(t)^{\mathrm{sep}}$ an separable closure of $F(t)$. If $f \in F(x)$ is a separable rational function with degree $d \geq 2$, we denote $f^{-n}(t)$ the set of preimages of $t$ under $f^n$. Let $$F_\infty(f,t) := F \left(\bigcup_{n \geq 0} f^{-n}(t) \right)$$ be the field extension of $F(t)$ obtained by adding all the preimages of $t$. It can be proved that $F_\infty(f,t)/F(t)$ is a Galois extension. Its Galois group, called the \textit{iterated Galois group} of $f$ over $F$ is $$G_\infty(F,f,t) := \Gal(F_\infty(f,t)/F(t)).$$

The Galois group $G_\infty(F,f,t)$ is profinite and acts transitively on $f^{-n}(t)$ for every $n \geq 1$; see \cite{Jones2014, Radi2025FPP}. Given $n \geq 1$ and $g \in G_\infty(F,f,t)$, let us denote $$X_n(g) := \# \set{z \in f^{-n}(t): g(z) = z}.$$

As $G_\infty(F,f,t)$ is profinite, we can consider $\mu$ its unique Haar probability measure. The \textit{fixed-point proportion} of $G_\infty(F,f,t)$ is defined as $$\FPP(G_\infty(F,f,t)) := \mu(\set{g \in G_\infty(F,f,t) : \# X_n(g) > 0 \text{ for all $n \geq 1$}}).$$

It has been observed since the eighties that this number has several applications to problems in arithmetic dynamics \cite{JonesManes2012, ManesThompson2019, Odoni1980, Odoni1985} and for this reason is one of the areas strongly studied currently; see \cite{Adams2025, AdamsHyde2025, Fariña2026, Tucker2013ABC, Jones2008, Jones2012} and \cite[Section 5]{BenedettoManesTucker2019}.

One of the applications of the fixed-point proportion concerns $\Per_{\inf}(f,K)$.

\begin{proposition}[{\cite[Proof of Theorem 6.5]{Juul2014} or \cite[Theorem 3.11]{BridyJones2022}}]
Let $K$ be a number field and $f \in K(x)$ a rational function with degree at least $2$. Then $$\Per_{\inf}(f,K) \leq \FPP(G_\infty(\CC,f,t))$$ where $t$ is transcendental over $\CC$.
\label{proposition: Perinf bounded by FPP}
\end{proposition}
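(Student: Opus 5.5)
The plan is to bound $\#\Per(f,\FF_\mathfrak{p})$ from above by the size of the image of an iterate, count that image with Chebotarev's density theorem for function fields over finite fields, and then let the level go to infinity. Since the statement concerns a $\liminf$, it is enough to find, for each $\epsilon>0$, infinitely many primes $\mathfrak{p}$ with $\#\Per(f,\FF_\mathfrak{p})/N(\mathfrak{p}) \le \FPP(G_\infty(\CC,f,t))+\epsilon$.

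\textbf{Step 1 (periodic points lie in every image).} Fix $n\ge 1$ and a prime $\mathfrak{p}$ of good reduction. A periodic point of $f_\mathfrak{p}$ lies in $f_\mathfrak{p}^n(\PP^1(\FF_\mathfrak{p}))$, because it is the $n$th image of a point further back in its cycle. Hence
\[
\#\Per(f,\FF_\mathfrak{p}) \le \#\set{a\in \PP^1(\FF_\mathfrak{p}) \st f_\mathfrak{p}^n(x)=a \text{ has a root in } \PP^1(\FF_\mathfrak{p})}.
\]

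\textbf{Step 2 (Galois-theoretic count).} Let $G_n(F)$ be the Galois group of the splitting field of $f^n(x)-t$ over $F(t)$; it acts on $f^{-n}(t)$. Let $k_n/K$ be the field of constants of this splitting field over $K(t)$. I restrict to primes $\mathfrak{p}$ that split completely in $k_n$; there are infinitely many by Chebotarev. For such $\mathfrak{p}$ there is no constant field extension after reduction, so the arithmetic group of $f_\mathfrak{p}^n(x)-t$ over $\FF_\mathfrak{p}(t)$ equals its geometric group over $\overline{\FF}_\mathfrak{p}(t)$. The Chebotarev density theorem for function fields over $\FF_q$, with the Weil bound, then gives
\[
\frac{\#f_\mathfrak{p}^n(\PP^1(\FF_\mathfrak{p}))}{N(\mathfrak{p})} = \frac{\#\set{g\in G_n(\overline{\FF}_\mathfrak{p}) \st g \text{ fixes a point of } f^{-n}(t)}}{\# G_n(\overline{\FF}_\mathfrak{p})} + O_n\big(N(\mathfrak{p})^{-1/2}\big).
\]
Here the implied constant depends only on $n$ (through $d^n$ and the genus). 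The finitely many branch values and points of bad reduction contribute only $O_n(1)$.

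\textbf{Step 3 (comparison with $\CC$) — the main obstacle.} I must show that $G_n(\overline{\FF}_\mathfrak{p})\cong G_n(\CC)$ as permutation groups on $f^{-n}(t)$ for all $\mathfrak{p}$ of large enough norm. I would take $\mathfrak{p}$ with residue characteristic $> d^n$, so that all ramification of $f^n(x)-t$ over $\PP^1$ is tame. I would also require that $\mathfrak{p}$ avoid the finitely many primes where branch points collide or the reduction degenerates. Then Grothendieck's theory of the tame fundamental group (specialization of tame covers) identifies the geometric monodromy of the reduced cover with that over $\overline{K}$, and hence with that over $\CC$. Combining the three steps, for these $\mathfrak{p}$ the proportion $\#\Per(f,\FF_\mathfrak{p})/N(\mathfrak{p})$ is at most $P_n + O_n(N(\mathfrak{p})^{-1/2})$. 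Here $P_n$ is the proportion of elements of $G_n(\CC)$ with a fixed point in $f^{-n}(t)$. Therefore $\Per_{\inf}(f,K)\le P_n$ for every $n$.

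\textbf{Step 4 (limit).} An element of $G_\infty(\CC,f,t)$ fixing a point at level $n+1$ also fixes its image at level $n$. So the sets $\set{g \st X_n(g)>0}$ decrease in $n$, their Haar measures equal $P_n$, and by continuity of the measure $P_n \to \FPP(G_\infty(\CC,f,t))$. Letting $n\to\infty$ gives the bound. The delicate point is Step 3, namely good reduction of geometric monodromy. The restriction to primes split in $k_n$ in Step 2 is what avoids having to handle fixed-point proportions of nontrivial Frobenius cosets.
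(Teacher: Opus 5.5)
Your proposal is correct and follows essentially the same route as the proof the paper cites without reproducing it (Juul--Kurlberg--Madhu--Tucker, proof of Theorem 6.5). That route is: periodic points lie in $f_\mathfrak{p}^n(\PP^1(\FF_\mathfrak{p}))$; this image is counted by function-field Chebotarev at primes splitting completely in the constant field $k_n$; the geometric monodromy of $f_\mathfrak{p}^n$ is identified with that over $\CC$ via tame specialization for large $N(\mathfrak{p})$; and one lets $n\to\infty$. One point of logical order: the claim in Step 2 that such $\mathfrak{p}$ give no constant field extension actually depends on Step 3. Splitting in $k_n$ only embeds the arithmetic group of the reduction (a decomposition group) into $G_n(\overline{K})$, and it is the equality $\#G_n(\overline{\FF}_\mathfrak{p})=\#G_n(\overline{K})$ that forces this group to coincide with the geometric group.
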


A recent result of Fariña-Asategui and the author classifies the values of the fixed-point proportion for iterated Galois groups over algebraically closed fields:

\begin{theorem}[{\cite[Theorem 2]{FariñaRadi2026FPP}}]
\label{theorem: classification FPP geometric}
Let $F$ be a field and $f\in F[x]$ a polynomial of degree $d \geq 2$, such that either $\car(F)=0$ or $\car(F)$ does not divide the local degree of any critical point of $f$. Then, for $t$ transcendental over $F$, either
\begin{enumerate}[\normalfont(i)]
\item $f$ has a euclidean orbifold of type $(2,2,\infty)$ and $$\FPP(G_\infty(\overline{F}, f, t))=\left \{ \begin{matrix} 
1/2 & \mbox{if $d$ is odd,} \\ 
1/4 & \mbox{if $d$ is even;}
\end{matrix}\right.$$
\item or $\FPP(G_\infty(\overline{F}, f, t))=0$ otherwise.
\end{enumerate}
\end{theorem}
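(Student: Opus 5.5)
The plan is to reduce the computation of $\FPP(G_\infty(\overline{F},f,t))$ to the behaviour of a martingale on the tree of preimages. Set $G=G_\infty(\overline{F},f,t)$ with Haar measure $\mu$, and view $G$ as acting on the rooted $d$-ary tree $T$ whose level $n$ is $f^{-n}(t)$.

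\textbf{Step 1: the martingale.} Regard $n\mapsto X_n(g)$ as a random process on $(G,\mu)$. Transitivity of $G$ on every level, applied to the stabilizer of each vertex acting on that vertex's children, shows that $(X_n)$ is a nonnegative integer-valued martingale with respect to the filtration of level-$n$ quotients. Such a martingale converges almost surely, hence is eventually constant. It follows that
\begin{align*}
\FPP(G)=\mu\bigl(\set{g : X_n(g)=c>0 \text{ for all large } n}\bigr).
\end{align*}
So $\FPP(G)>0$ if and only if, with positive probability, some fixed vertex has exactly one fixed child at every level from some point on.

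\textbf{Step 2: generators of the geometric group.} The hypothesis on $\car(F)$ makes every extension $F_n/\overline{F}(t)$ tamely ramified. Riemann's existence theorem, or Grothendieck's tame version in positive characteristic, then shows that $G$ is topologically generated by inertia generators $g_b$, one for each point $b$ of the post-critical set $P\cup\set{\infty}$, subject to $\prod g_b=1$. The cycle type of $g_b$ at level $n$ is read off from the local degrees of $f^n$ over $b$. In particular, $g_\infty$ acts as a $d^n$-cycle at level $n$, i.e. as the odometer.

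\textbf{Step 3: the euclidean cases.} If $P$ is finite, the orbifold is either euclidean or hyperbolic. The polynomials with euclidean orbifold are, up to conjugacy over $\overline{F}$, the power maps (type $(\infty,\infty)$) and $\pm T_d$ (type $(2,2,\infty)$).
\begin{itemize}
\item For power maps, $G\cong\ZZ_d$ acts by translations $x\mapsto x+b$ on $\varprojlim \ZZ/d^n\ZZ$. Such a translation has a fixed point at every level only when $b=0$, which has measure zero, so $\FPP(G)=0$.
\item For $T_d$, the group is $\ZZ_d\rtimes\set{\pm1}$, acting by $x\mapsto \pm x+b$. Translations contribute measure zero, and reflections form half of $G$. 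A reflection $x\mapsto -x+b$ fixes a point modulo $d^n$ if and only if $2x\equiv b \pmod{d^n}$ is solvable. For $d$ odd this always holds, giving $\FPP(G)=1/2$. For $d$ even it holds exactly when $b$ is even, which is half of the reflections, giving $\FPP(G)=1/4$.
\end{itemize}

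\textbf{Step 4: the hyperbolic and infinite post-critical cases.} This is the main obstacle: showing that $X_n\to 0$ almost surely. The approach is to argue vertex by vertex. Fix a vertex $v$ fixed by $g$, and use self-similarity to show that the restriction of $\mathrm{Stab}(v)$ to the subtree below $v$ contains conjugates of inertia elements with nontrivial cycles on the children of deep descendants. One must then show that, conditional on $v$ being fixed, there is a uniform $\delta>0$ such that, infinitely often along the path, the unique fixed child fails to persist with probability at least $\delta$. Combined with the almost sure stabilisation from Step 1, this forces $X_n=0$ eventually.

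To obtain $\delta$, I would compare with the euclidean case. In the hyperbolic or infinite-$P$ setting, the local monodromy seen along a random end is not virtually abelian. Concretely, there are two inertia generators whose restrictions to some subtree generate a group acting with more than two orbits on the children. This rules out the rigid ``one fixed child forever'' pattern, which in the euclidean case survives only because the reflections of $\ZZ_d\rtimes\set{\pm1}$ have that exact structure.
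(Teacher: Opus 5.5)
The paper gives no proof of this statement; it is quoted from Fariña-Asategui and Radi, so your proposal has to stand on its own. Your Step 3 is fine. The geometric group of $T_d$ is $\ZZ_d\rtimes\{\pm1\}$ acting by $x\mapsto \pm x+b$ on $\varprojlim \ZZ/d^n\ZZ$, and the values $1/2$ and $1/4$ are right. You should still treat $-T_d$ for odd $d$, and say why conjugation over $\overline{F}$ preserves the permutation group. You should also justify that type $(2,2,\infty)$ forces conjugacy to $\pm T_d$ in positive characteristic.

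The theorem's real content, however, is the vanishing in all other cases. Step 4 gives no argument there, and Step 1 sets up the wrong target:
\begin{itemize}
\item Your reformulation, \emph{$\FPP(G)>0$ iff with positive probability some fixed vertex has exactly one fixed child at every level from some point on}, is refuted by your own even case. For $d$ even and $b$ even, the reflection $x\mapsto -x+b$ fixes exactly $b/2$ and $b/2+d^n/2$ modulo $d^n$. At the next level, both fixed points reduce to $b/2$, so the first vertex has two fixed children and the second has none. On this set of measure $1/4$ one has $X_n=2$ for all $n$, yet no fixed vertex ever has exactly one fixed child.
\item What must be excluded is the event $\{X_n=c \text{ for all large } n\}$ for each $c\geq 1$. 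One way is to prove $\mu(X_{n+1}\neq X_n\mid\mathcal{F}_n)\geq\delta_c$ on $\{X_n=c\}$ for infinitely many $n$.
\item Conditioning on the level-$n$ quotient averages over a coset of $N_n=\ker(G_{n+1}\to G_n)$. So the martingale property needs $N_n$ to act transitively on every sibling set; transitivity of vertex stabilizers is not enough. For example, the dihedral group of order $8$ acting regularly on level $3$ of the binary tree is level-transitive, has trivial kernel over level $2$, and contains elements with $X_2=2$ and $X_3=0$.
\item This last point is repairable here, because $g_\infty^{d^n}\in N_n$ acts as a $d$-cycle on each sibling set.
\end{itemize}

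The genuine gap is where $\delta_c$ comes from. The odometer only puts rotations into $N_n$, and rotations are fully compatible with the Chebyshev obstruction. If $\rho$ acts on the children of a fixed vertex as $x\mapsto -x$ on $\ZZ/d\ZZ$ with $d$ odd, then every element of $\rho\langle c\rangle$, with $c$ the $d$-cycle, has exactly one fixed child, so no $\delta$ appears.

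For non-Chebyshev $f$ you therefore need concrete structural information: for infinitely many $n$, $N_n$ must contain further elements acting on the children of the fixed vertices so as to change $X$ with probability bounded below. In the hyperbolic case these would come from inertia at the finite post-critical points, transported down the tree. In the post-critically infinite case, the Step 2 presentation by finitely many inertia generators with one product relation is not available, so that case needs its own mechanism.

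The heuristic you offer in place of this cannot do the job, because being not virtually abelian does not force $\FPP=0$. Take the affine group $\{x\mapsto ux+b : u\in\ZZ_3^\times,\ b\in\ZZ_3\}$ acting on $\varprojlim \ZZ/3^n\ZZ$. It is closed, level-transitive and not virtually abelian. An element fixes an end if and only if $v_3(b)\geq v_3(u-1)$, an event of Haar measure $1/2+1/8=5/8$. Whether the FPP is zero or positive is decided by the specific structure of geometric iterated Galois groups, and that input is exactly what Step 4 leaves out.
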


As it was proved in \cite[Proposition 6.6]{FariñaRadi2026FPP}, polynomials with euclidean orbifold $(2,2,\infty)$ are linearly conjugate over $\overline{F}$ to $\pm T_{d, \zeta}$ for some $d \geq 2$ and $\zeta \in \overline{F}$. By \cref{lemma: Chebyshev and twisted Chebyshev}, we have that in fact the maps $T_{d,\xi}$ are linearly conjugate over $\overline{F}$ to $\pm T_d$. 

All this together allows us to conclude point (3) in \cref{Theorem: main result clasification}:

\begin{theorem}
Let $K$ be a number field and $f \in K[x]$ a polynomial with degree $d \geq 2$ such that $f$ is not linearly conjugate over $\CC$ to $\pm T_d$ . Then $$\Per_{\inf}(f,K) = 0.$$
\end{theorem}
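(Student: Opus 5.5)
The plan is to combine \cref{proposition: Perinf bounded by FPP} with \cref{theorem: classification FPP geometric}, taking $F = K$ (or directly $F = \CC$). Since $K$ has characteristic $0$, the hypothesis of \cref{theorem: classification FPP geometric} holds automatically. The first step is therefore to check that both results concern the same group. \cref{proposition: Perinf bounded by FPP} bounds $\Per_{\inf}(f,K)$ by $\FPP(G_\infty(\CC,f,t))$. \cref{theorem: classification FPP geometric} applied with $F = \CC$ (where $\overline{F} = \CC$), and $f$ viewed as an element of $\CC[x]$ of degree $d \geq 2$, computes $\FPP(G_\infty(\CC,f,t))$. So it suffices to show that we are in alternative (ii) of that theorem, which forces $\FPP(G_\infty(\CC,f,t)) = 0$.

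The second step argues by contraposition. Suppose we were in case (i), i.e. $f$ has a euclidean orbifold of type $(2,2,\infty)$. By \cite[Proposition 6.6]{FariñaRadi2026FPP}, $f$ is then linearly conjugate over $\CC$ to $\pm T_{d,\zeta}$ for some $\zeta \in \CC$; by the computation preceding \cref{lemma: Chebyshev and twisted Chebyshev}, $\zeta^{d-1}=1$. \cref{lemma: Chebyshev and twisted Chebyshev} (over the field $\CC$, which contains $\sqrt{\zeta}$) gives a linear $L(x) = ax$ conjugating $T_{d,\zeta}$ to $\pm T_d$. The same $L$ conjugates $-T_{d,\zeta}$ to $\mp T_d$, since $L^{-1}\circ(-g)\circ L = -(L^{-1}\circ g\circ L)$ when $L$ is linear through $0$. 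Composing the two conjugations, $f$ would be linearly conjugate over $\CC$ to $\pm T_d$, contradicting the hypothesis. Hence case (ii) holds.

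Finally, $0 \leq \Per_{\inf}(f,K) \leq \FPP(G_\infty(\CC,f,t)) = 0$, which gives the claim. I do not expect any serious obstacle, since every ingredient is quoted. The only point needing care is the sign bookkeeping when composing the conjugations with $\pm$. This is handled by noting that conjugation by $x\mapsto ax$ commutes with negation, so the result is always conjugate to $T_d$ or $-T_d$.
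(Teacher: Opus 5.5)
Your proposal is correct and matches the paper's proof: you bound $\Per_{\inf}(f,K)$ by the fixed-point proportion, then use the classification theorem, \cite[Proposition 6.6]{FariñaRadi2026FPP} and the twisted Chebyshev lemma to rule out the $(2,2,\infty)$ case. Your explicit check that conjugation by $x \mapsto ax$ commutes with negation is a harmless elaboration of a sign step the paper leaves implicit.
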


\begin{proof}
Combine \cref{proposition: Perinf bounded by FPP} and \cref{theorem: classification FPP geometric}.
\end{proof}

\section{Proportion of periodic points in Chebyshev polynomials}
\label{section: Proportion of periodic points in Chebyshev polynomials}

The goal of this section is to calculate $\Per(\pm T_d, F)$ for any field $F$ and any $d \geq 2$, and count its number of elements when $F$ is a finite field. The strategy will follow the same idea exposed in \cite[Example 7.2]{Juul2014}, so for this reason, we keep much of the notation used there.

Let $F$ be any field of $\car(F) \neq 2$ and $\overline{F}$ its algebraic closure. Given $d \geq 2$, define $\pi(x) := x + x^{-1}$ and $g(x) := x^d$. Then \cref{equation: definition Chebyshev polynomial} can be read as the following commutative diagram
\begin{equation}
\centering
\begin{tikzcd}
\PP^1(F) \arrow{rr}{g} \arrow{dd}{\pi} & & \PP^1(F) \arrow{dd}{\pi} \\
 & & \\
\PP^1(F) \arrow{rr}{T_d} & & \PP^1(F) \\
\end{tikzcd}
\label{diagram: definition Td}
\end{equation}

\begin{lemma}
Let $F$ be any field of $\car(F) \neq 2$. Then,
\begin{enumerate}
\item the map $\pi: \PP^1(\overline{F}) \rightarrow \PP^1(\overline{F})$ is surjective. 
\item We have $\pi(x) = \pi(y)$ if and only if $y = x$ or $y = x^{-1}$. 
\item The only critical points of $\pi$ are $\pm 1$ with local degree $2$.
\end{enumerate}
\label{lemma: properties pi}
\end{lemma}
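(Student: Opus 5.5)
The plan is to verify the three claims directly from the formula $\pi(x) = x + x^{-1} = (x^2+1)/x$, viewed as a degree-$2$ rational map on $\PP^1(\overline{F})$, with $\pi(0)=\pi(\infty)=\infty$.

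For (1), surjectivity, I will handle $\infty$ and finite points separately. The point $\infty$ is hit by $0$ and by $\infty$. For $c \in \overline{F}$, solving $\pi(x) = c$ amounts to solving $x^2 - cx + 1 = 0$. Since $\overline{F}$ is algebraically closed, this quadratic has a root $x$, and $x \neq 0$ because the constant term is $1$. Hence $\pi(x) = c$.

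For (2), the fibres, I first note that $\pi(x) = \pi(x^{-1})$ by symmetry, with the convention $0^{-1}=\infty$. For the converse with finite value $c$, both $x$ and $y$ are roots of $X^2 - cX + 1$. The product of the roots of this quadratic is $1$, so its two roots (counted with multiplicity) are $x$ and $x^{-1}$. Therefore $y \in \{x, x^{-1}\}$. If instead $c = \infty$, then $x, y \in \{0,\infty\}$, and these two points are inverse to each other.

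For (3), the critical points, I compute $\pi'(x) = 1 - x^{-2}$, which vanishes exactly when $x^2 = 1$, that is, at $x = \pm 1$. These are distinct because $\car(F) \neq 2$. Since $\deg \pi = 2$, the local degree at any point is at most $2$, so both $\pm1$ have local degree exactly $2$. The only delicate part is the points $0$ and $\infty$, where the derivative formula does not apply directly. At $0$, I use the coordinate $1/\pi(x) = x/(x^2+1)$, whose derivative at $0$ is $1 \neq 0$. At $\infty$, I substitute $x = 1/u$ and get $\pi = \pi(u)$, so the same computation shows $\infty$ is unramified. Equivalently, the fibre over $\infty$ consists of the two distinct points $0$ and $\infty$, and for a degree-$2$ map two distinct preimages force both to be unramified.

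The main obstacle is just this bookkeeping at $0$ and $\infty$ and making sure the characteristic assumption is used where it is needed: it guarantees that $1 \neq -1$, and that the discriminant $c^2-4$ gives a double root exactly when $c = \pm 2$, which is consistent with (3).
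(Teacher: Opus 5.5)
Your proof is correct and follows the paper's argument. Surjectivity and the fibre description come from the quadratic $x^2 - cx + 1 = 0$, and the critical points come from $\pi'(x) = 1 - x^{-2}$. Your differences are small refinements: Vieta (product of roots equal to $1$) handles the case $x = x^{-1}$ more cleanly than the paper's ``at most two solutions''; you bound $e_\pi(\pm 1)$ by $\deg \pi = 2$ where the paper uses $\pi''(\pm 1) = \pm 2 \neq 0$; and you also check $0$ and $\infty$, which the paper omits.
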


\begin{proof}
Let $w \in \PP^1(F)$. If $w = \infty$, then $\pi(0) = \pi(\infty) = \infty$. If $w \in F$, then $\pi(x) = w$ if and only if $$x^2 -wx + 1 = 0.$$ This has a solution since we are working over $\overline{F}$. Moreover, since it is a quadratic polynomial it has at most two solutions. As $\pi(x) = \pi(x^{-1})$, this implies that $\pi(x) = \pi(y)$ if and only if $y = x$ or $y = x^{-1}$.

Finally, the derivative of $\pi$ is $$\pi'(x) = 1 - x^{-2},$$ so the critical points are $\pm 1$. As $\pi''(\pm 1) = \pm 2$, we conclude that $e_\pi(\pm 1) = 2$.
\end{proof}

We summarize well-known properties about the Chebyshev polynomials in the following proposition:

\begin{proposition}
Let $d \geq 1$ and $F$ a field of characteristic zero. 
\begin{enumerate}
\item $T_0(x) = 2$, $T_1(x) = x$ and then we have the recursion $$T_{d+1}(x) = x T_d(x) - T_{d-1}(x).$$
\item If $T_d(x) = \sum_{i = 0}^d a_{i,d} x^i$, then $a_{i,d} \in \ZZ$ and $a_{i,d} = 0$ if and only if $i \equiv d \pmod{2}$.
\item If $\alpha \in \overline{F}$ then $e_{\pm T_d}(\alpha)$ is either $1$ or $2$. In particular, if $d \geq 3$ then $\pm T_d$ is only totally ramified at $\infty$.
\end{enumerate}
\label{proposition: properties Chebyshev polynomials}
\end{proposition}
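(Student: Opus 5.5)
The plan is to derive all three items from the functional equation \cref{equation: definition Chebyshev polynomial}, i.e.\ from the commutative diagram \cref{diagram: definition Td}, together with \cref{lemma: properties pi}.

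\textbf{Item (1).} I will use the Laurent-polynomial identity
\[
x^{d+1}+x^{-(d+1)} = \left(x+x^{-1}\right)\left(x^d+x^{-d}\right) - \left(x^{d-1}+x^{-(d-1)}\right).
\]
Combined with $x^0+x^0=2$ and $x+x^{-1}=\pi(x)$, it shows that the polynomials defined by $T_0=2$, $T_1=x$ and $T_{d+1}=xT_d-T_{d-1}$ satisfy $T_d(\pi(x))=x^d+x^{-d}$. This relation determines $T_d$ uniquely: two polynomials agreeing on $\pi(\overline{F}^\times)$ agree on an infinite set, since $\pi$ is surjective by \cref{lemma: properties pi}. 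Hence these polynomials are the Chebyshev polynomials.

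\textbf{Item (2).} The recursion gives integrality by induction. It also gives $a_{i,d+1}=a_{i-1,d}-a_{i,d-1}$, with the convention $a_{j,e}=0$ for $j<0$ or $j>e$. For the vanishing pattern I will use parity. Since $\pi(-x)=-\pi(x)$, we get $T_d(-y)=(-1)^dT_d(y)$, so $a_{i,d}=0$ whenever $i\not\equiv d\pmod 2$. The statement as printed has the parity reversed; for example $T_2=x^2-2$ has $a_{0,2}=-2\neq 0$. So I would prove the corrected version: $a_{i,d}=0$ if and only if $i\not\equiv d \pmod 2$, for $0\le i\le d$.

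For non-vanishing when $i\equiv d\pmod 2$, I will prove the sharper claim $(-1)^{(d-i)/2}a_{i,d}>0$ by induction on $d$. The base cases are $T_0=2$ and $T_1=x$. In the recursion for $a_{i,d+1}$, multiply by $(-1)^{(d+1-i)/2}$. Both terms $(-1)^{(d+1-i)/2}a_{i-1,d}$ and $-(-1)^{(d+1-i)/2}a_{i,d-1}=(-1)^{(d-1-i)/2}a_{i,d-1}$ are then nonnegative by induction. At least one is strictly positive: the first for $i\ge1$, the second for $i\le d-1$. This sign bookkeeping, including the boundary indices $i=0$ and $i=d+1$, is the only step needing care.

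\textbf{Item (3).} Let $\alpha\in\overline{F}$ and, by \cref{lemma: properties pi}(1), pick $x$ with $\pi(x)=\alpha$. Since $\alpha$ is finite, $x\neq0,\infty$. Multiplicativity of local degree in $T_d\circ\pi=\pi\circ g$ gives
\[
e_{T_d}(\alpha)\,e_\pi(x)=e_\pi(x^d)\,e_g(x).
\]
In characteristic zero $e_g(x)=1$ for $x\neq 0,\infty$, and $e_\pi\in\{1,2\}$ by \cref{lemma: properties pi}(3). Hence $e_{T_d}(\alpha)\le 2$.

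Composing with $y\mapsto -y$, which has local degree $1$ everywhere, gives the same bound for $-T_d$. Finally, for $d\ge3$ no finite point can have local degree $d>2$. Since $\infty$ is totally ramified for any polynomial, $\infty$ is the only totally ramified point of $\pm T_d$.
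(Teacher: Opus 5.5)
Your proposal is correct. For items (1) and (2) you follow the paper. It derives the recursion from the same identity $(x+x^{-1})(x^d+x^{-d})=(x^{d+1}+x^{-d-1})+(x^{d-1}+x^{-d+1})$; you make the existence and uniqueness step explicit, which the paper leaves implicit. It proves (2) by the same sign induction on $a_{i,d+1}=a_{i-1,d}-a_{i,d-1}$, concluding that $a_{d-j,d}$ is positive for $j\equiv 0 \pmod 4$, negative for $j\equiv 2 \pmod 4$ and zero for $j$ odd. That conclusion is exactly your corrected statement, $a_{i,d}=0 \iff i\not\equiv d \pmod 2$, so you were right that the printed parity is a typo. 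The paper's later uses ($a_{d-1}=0$, $a_2\neq 0$ for even $d$, and $a_2=0\neq a_3$ for odd $d$) also require your version. Getting the vanishing half from $T_d(-y)=(-1)^dT_d(y)$ instead of from the induction is a harmless shortcut.

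Item (3) is where your route genuinely differs. The paper differentiates the functional equation to get $T_d'(\pi(x))=\frac{d}{x^{d-1}}\cdot\frac{x^{2d}-1}{x^2-1}$. It locates the finite critical points as $\pi(\xi_{2d}^k)$ with $k\notin d\ZZ$. It then applies multiplicativity at those points ($g(x_k)=\pm 1$, $e_\pi(x_k)=1$, $e_\pi(\pm1)=2$) to show each has local degree exactly $2$. You instead apply multiplicativity at an arbitrary preimage $x$ of $\alpha$ and read off $e_{T_d}(\alpha)\le e_{T_d}(\alpha)\,e_\pi(x)=e_\pi(x^d)\le 2$ directly. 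Your argument is shorter and needs no derivative computation. The paper's gives more: exactly which points are critical, and that each has local degree exactly $2$ with critical value $\pm 2$. That extra information is not needed for the statement as posed.
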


\begin{proof}
For (1), the first cases are obvious from \cref{equation: definition Chebyshev polynomial}. Then 
\begin{align*}
\left( x + \frac{1}{x} \right) T_d\left( x + \frac{1}{x} \right) = \left( x + \frac{1}{x} \right) \left( x^d + \frac{1}{x^d} \right) = T_{d+1}\left( x + \frac{1}{x} \right) + T_{d-1}\left( x + \frac{1}{x} \right).
\end{align*}

For (2), using  previous point we have the relation 
$$a_{d+1-j,d+1} = a_{d-j,d} - a_{d+1-j,d-1}.$$
Then by induction it follows that
\begin{align*}
\left \{ \begin{matrix} 
a_{d-j,d} > 0 & \mbox{if $j \equiv 0 \pmod{4}$,} \\ 
a_{d-j,d} = 0 & \mbox{if $j \equiv 1 \pmod{2}$,} \\
a_{d-j,d} < 0 & \mbox{if $j \equiv 2 \pmod{4}$.} 
\end{matrix}\right.
\end{align*} 
Finally for (3), using \cref{equation: definition Chebyshev polynomial} we obtain 
\begin{align*}
T_d'(\pi(x)) = \frac{d}{x^{d-1}} \frac{(x^2)^d-1}{x^2-1}.
\end{align*}
So, if $\pi(x) \neq \infty$ and we let $\xi_{2d}$ be a primitive $(2d)$th root of unity, then $T_d'(\pi(x)) = 0$ at the points $$x_k = (\xi_{2d})^k$$ with $k \notin d\ZZ$. By multiplicativity of the local degree and \cref{diagram: definition Td} we have $$e_{T_d}(\pi(x_k)) \cdot e_\pi(x_k) = e_\pi(g(x_k)) \cdot e_g(x_k).$$
Then $g(x_k) = \pm 1$ and $e_g(x_k) = 1$. Moreover, by \cref{lemma: properties pi} $e_\pi(x_k) = 1$ and $e_\pi(g(x_k)) = 2$, so $e_{T_d}(\pi(x_k)) = 2$. The critical points of $-T_d$ and its corresponding local degrees are the same.
\end{proof}

In principle we need to find the periodic points of $\pm T_d$. However, the following lemma shows us that we only need to worry about the periodic points of the positive case:

\begin{lemma}
\label{lemma: Per -Td and Per Td}
Given $d \geq 2$ and a field $F$, then 
$$\Per(-T_d, F) =  (-1)^{d+1}\Per(T_d, F).$$ 
\end{lemma}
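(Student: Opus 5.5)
Throughout, the key tool is the parity of $T_d$ from \cref{proposition: properties Chebyshev polynomials}(2). Since only monomials with $i \equiv d \pmod 2$ survive (the coefficients with $i \not\equiv d$ vanish), we have $T_d(-x) = (-1)^d T_d(x)$. The identity is over $\ZZ$, so it holds over any field $F$ after reduction; alternatively one can read it off directly from \cref{equation: definition Chebyshev polynomial}, since $T_d(-(x+1/x)) = T_d((-x)+1/(-x)) = (-1)^d(x^d+x^{-d})$.

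The case $d$ odd comes first. Here $-T_d(x) = T_d(-x)$, so $-T_d = T_d \circ \sigma$ with $\sigma(x) = -x$. Also $T_d \circ \sigma = \sigma \circ T_d$, because $T_d$ is odd. Hence $(-T_d)^n = (\sigma \circ T_d)^n = \sigma^n \circ T_d^n$, since $\sigma$ commutes with $T_d$ and $\sigma^2 = \mathrm{id}$. Then $(-T_d)^{2n} = T_d^{2n}$. A point is periodic for a map $h$ if and only if it is periodic for $h^2$: if $h^n(x)=x$ then $h^{2n}(x)=x$, and conversely $h^{2m}(x)=x$ already exhibits periodicity. So $\Per(-T_d,F) = \Per(T_d,F)$, which matches $(-1)^{d+1} = 1$.

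The case $d$ even uses the conjugation $L(x) = -x$ from \cref{lemma: Chebyshev and twisted Chebyshev}. Since $T_d(-x) = T_d(x)$, we get $L^{-1}\circ T_d \circ L(x) = -T_d(-x) = -T_d(x)$, so $-T_d = L^{-1} \circ T_d \circ L$. Conjugation transports periodic points, giving $\Per(-T_d,F) = L^{-1}(\Per(T_d,F)) = -\Per(T_d,F)$, which matches $(-1)^{d+1} = -1$.

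I do not expect a real obstacle. The one point needing care is that these identities hold over an arbitrary field $F$, including positive characteristic and characteristic $2$. They do, because they are polynomial identities with integer coefficients and $-1$ makes sense in any ring.
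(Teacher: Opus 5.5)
Your proof is correct and follows the paper's argument. For odd $d$ the paper also uses $(-T_d)^2=T_d^2$, which you justify cleanly via $-T_d=\sigma\circ T_d=T_d\circ\sigma$. For even $d$ it also conjugates by $L(x)=-x$ from \cref{lemma: Chebyshev and twisted Chebyshev}. You also read the parity fact correctly as $a_{i,d}=0$ for $i\not\equiv d \pmod 2$, which is what the proof of \cref{proposition: properties Chebyshev polynomials}(2) shows; the displayed statement there has the parity condition reversed.
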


\begin{proof}
If $d$ is even, by \cref{lemma: Chebyshev and twisted Chebyshev} we have that $T_d$ and $-T_d$ are linearly conjugate over $F$ by $L(x) = -x$, so $$\Per(-T_d, F) =  -\Per(T_d, F) := \set{-\alpha: \alpha \in \Per(T_d, F)}.$$ If $d$ is odd, using \cref{equation: definition Chebyshev polynomial} it is not hard to see that $$(-T_d)^2 = (T_d)^2.$$ Thus if $\alpha \in \Per(-T_d, F)$, there exists $m \geq 1$ such that $(-T_d)^m(\alpha) = \alpha$ and so 
\begin{align*}
\alpha = (-T_d)^{2m}(\alpha) = (T_d)^{2m}(\alpha)
\end{align*}
proving that $\alpha$ is also periodic for $T_d$. The same argument applies starting from $T_d$ so $\Per(-T_d, F) = \Per(T_d, F)$.
\end{proof}

The next lemma shows that the periodic points of $T_d$ are obtained from the periodic points of $g$. 

\begin{lemma}
Let $d \geq 1$ and $F$ a field. Let $\alpha \in F$ and $\beta \in \overline{F}$ such that $\pi(\beta) = \alpha$. Then $\alpha \in \Per(T_d, F)$ if and only if $\beta \in \Per(g, \overline{F})$.
\label{lemma: Per Td bijection Per g}
\end{lemma}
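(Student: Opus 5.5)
The argument uses the semiconjugacy $\pi \circ g^n = T_d^n \circ \pi$ from \cref{diagram: definition Td}, iterated, together with the description of the fibres of $\pi$ in \cref{lemma: properties pi}(2). Throughout, $F$ has characteristic $\neq 2$, so \cref{lemma: properties pi} applies; $\beta$ lies in $\overline{F}$, and we view $\alpha$ in $\overline{F}$ as well. Since $T_d$ has coefficients in $\ZZ$, periodicity of $\alpha$ under $T_d$ does not depend on whether we work in $F$ or in $\overline{F}$.

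\emph{($\Leftarrow$)} Suppose $\beta \in \Per(g,\overline{F})$, say $g^n(\beta)=\beta$ with $n\ge 1$. Then
\[
T_d^n(\alpha)=T_d^n(\pi(\beta))=\pi(g^n(\beta))=\pi(\beta)=\alpha,
\]
so $\alpha$ is periodic for $T_d$.

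\emph{($\Rightarrow$)} Suppose $T_d^n(\alpha)=\alpha$ with $n \geq 1$. Then $\pi(g^n(\beta)) = T_d^n(\pi(\beta)) = \alpha = \pi(\beta)$. By \cref{lemma: properties pi}(2), either $g^n(\beta) = \beta$ or $g^n(\beta) = \beta^{-1}$.
\begin{itemize}
\item In the first case $\beta$ is periodic for $g$ and we are done.
\item In the second case, apply $g^n$ once more. Since $g(x)=x^d$ commutes with inversion, $g^n(\beta^{-1}) = (g^n(\beta))^{-1} = \beta$. Hence $g^{2n}(\beta) = g^n(\beta^{-1}) = \beta$, and $\beta$ is again periodic.
\end{itemize}
The degenerate values need a separate check. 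The case $\alpha=\infty$ does not arise, since $\alpha \in F$. If $\beta=0$, then $\pi(\beta)=\infty$, so this case is excluded as well. All other $\beta$ lie in $\overline{F}^\times$, where the inversion identity above holds.

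I expect no real obstacle. The only point needing care is the inversion branch $g^n(\beta)=\beta^{-1}$, which is handled by doubling the period.
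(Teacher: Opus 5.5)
Your proposal is correct and follows essentially the same argument as the paper: the iterated semiconjugacy $\pi\circ g^n = T_d^n\circ\pi$, the fibre description $g^n(\beta)\in\{\beta,\beta^{-1}\}$ from \cref{lemma: properties pi}(2), and doubling the period in the inversion case. Your extra checks, that $\beta=0$ cannot occur and that periodicity of $\alpha$ is the same over $F$ and over $\overline{F}$, fill in details the paper leaves implicit.
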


\begin{proof}
If $\alpha \in \Per(T_d, F)$, there exists $m \geq 1$ such that $(T_d)^m(\alpha) = \alpha$. Then by \cref{diagram: definition Td}, 
\begin{align*}
\pi(\beta) = \alpha = (T_d)^m(\alpha) = (T_d)^m(\pi(\beta)) = \pi(g^m(\beta)).
\end{align*}
By \cref{lemma: properties pi} this means that $g^m(\beta) \in \set{\beta, \beta^{-1}}$. In the first case $\beta$ is periodic for $g$. If $g^m(\beta) = \beta^{-1}$ then 
\begin{align*}
g^{2m}(\beta) = g^m(\beta^{-1}) = (g^m(\beta))^{-1} = \beta.
\end{align*}

Conversely, if $\beta \in \Per(g,\overline{F})$, there exists $m \geq 1$ such that $g^m(\beta) = \beta$. Again by \cref{diagram: definition Td}
\begin{equation*}
\alpha = \pi(\beta) = \pi(g^m(\beta)) = (T_d)^m(\pi(\beta)) = (T_d)^m(\alpha).  \qedhere
\end{equation*}
\end{proof}

From now and on, assume that $F$ is a finite field with $\car(F) \neq 2$ and let $F_2$ be the extension of degree $2$ of $F$. By \cref{lemma: properties pi} $\pi(F_2^\times)$ contains $F^\times$ and it is 2-to-1 except at $\pm 1$. Consider the subgroup 
\begin{align*}
H_2 := \set{\beta \in F_2^\times: \beta^{\abs{F}+1 } = 1}
\end{align*}
and $U_2 := H_2 \setminus \set{\pm 1}$.

\begin{lemma}
Let $F$ be a finite field with $\car(F) \neq 2$. Then, the set $$\set{\beta \in F_2^\times: \pi(\beta) \in F} = F^\times \bigcup U_2.$$ Moreover the union is disjoint.
\label{lemma: Fp and U2}
\end{lemma}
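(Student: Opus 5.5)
We prove both inclusions and then disjointness, using \cref{lemma: properties pi}.

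For $\supseteq$: if $\beta \in F^\times$ then $\pi(\beta) = \beta + \beta^{-1} \in F$. If $\beta \in U_2 \subset H_2$, let $q = \abs{F}$, so $\beta^{q+1} = 1$ and $\beta^q = \beta^{-1}$. The Frobenius $x \mapsto x^q$ generates $\Gal(F_2/F)$, and
\[
\pi(\beta)^q = \beta^q + \beta^{-q} = \beta^{-1} + \beta = \pi(\beta).
\]
Hence $\pi(\beta)$ is fixed by Frobenius and lies in $F$.

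For $\subseteq$: let $\beta \in F_2^\times$ with $\alpha := \pi(\beta) \in F$. Apply Frobenius to get $\pi(\beta^q) = \pi(\beta)^q = \alpha$. By \cref{lemma: properties pi}(2), either $\beta^q = \beta$ or $\beta^q = \beta^{-1}$.
\begin{itemize}
\item If $\beta^q = \beta$, then $\beta \in F^\times$.
\item If $\beta^q = \beta^{-1}$, then $\beta^{q+1} = 1$, so $\beta \in H_2$. Either $\beta \in U_2$, or $\beta = \pm 1 \in F^\times$.
\end{itemize}
In both cases $\beta \in F^\times \cup U_2$.

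For disjointness, suppose $\beta \in F^\times \cap H_2$. Then $\beta^{q-1} = 1$ and $\beta^{q+1} = 1$, so $\beta^2 = 1$ and $\beta = \pm 1$. Since $U_2$ excludes $\pm 1$, we get $F^\times \cap U_2 = \emptyset$. The hypothesis $\car(F) \neq 2$ guarantees that $\pm 1$ are two distinct elements, though disjointness does not depend on this.

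The only point needing care is justifying $\pi(\beta)^q = \pi(\beta^q)$. This holds because Frobenius is a field automorphism, so it commutes with sums and inverses.
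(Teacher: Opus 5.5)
Your proof is correct and is essentially the paper's argument: you apply the Frobenius $x \mapsto x^{q}$ with $q = |F|$, use part (2) of \cref{lemma: properties pi} to force $\beta^{q} \in \{\beta, \beta^{-1}\}$ whenever $\pi(\beta) \in F$, and get disjointness from $\beta^{q-1} = \beta^{q+1} = 1 \Rightarrow \beta^2 = 1$. Your split into two explicit inclusions is, if anything, cleaner than the paper's chain of equivalences, whose first equivalence holds only under the implicit assumption $\pi(\beta) \in F$.
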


\begin{proof}
If $\beta \in F^\times$ clearly $\pi(\beta) \in F$. Now $\beta \in F_2^\times \setminus F^\times$ if and only if the action of the Frobenius map is non-trivial. As the Frobenius map must send $\beta$ to the other solution of $\pi(x) = \pi(\beta)$, by \cref{lemma: properties pi} this implies that 
\begin{align}
\beta \in F_2^\times \setminus F^\times \Longleftrightarrow \beta^{\abs{F}} = \beta^{-1} \text{ and } \beta \neq \pm 1.
\label{equation: beta in Fp2 - Fp}
\end{align}
Similarly, we have $\pi(\beta) \in F$ if and only if the action of the Frobenius map is trivial, namely, 
\begin{align}
\pi(\beta) \in F \Longleftrightarrow \beta + \beta^{-1} = \beta^{\abs{F}} + \beta^{-\abs{F}}.
\label{equation: pi(beta) in Fp}
\end{align}
Combining \cref{equation: beta in Fp2 - Fp} and \cref{equation: pi(beta) in Fp} we obtain that 
\begin{align*}
\beta \in F_2^\times \setminus F^\times \text{ and } \pi(\beta) \in F \Longleftrightarrow \beta^{\abs{F}+1} = 1 \text{ and } \beta \neq \pm 1.
\end{align*}
To prove that the union is disjoint observe that if $\beta \in F^\times \cap U_2$ then $\beta^{\abs{F}} = \beta$ and $\beta^{\abs{F}+1} = 1$, which implies that $\beta^2 = 1$ and consequently $\beta = \pm 1$. However, by definition $\pm 1 \notin U_2$.
\end{proof}

Combining \cref{lemma: Per Td bijection Per g} and \cref{lemma: Fp and U2}, we can give a description of the periodic points of $T_d$ in $F$:

\begin{lemma}
Let $d \geq 2$, $F$ a finite field with $\car(F) \neq 2$, $F_2$ the extension of degree $2$ of $F$ and let $\abs{ \cdot}$ denote the order of an element in the group $F_2^\times$. Then 
\begin{align*}
\Per(T_d, F) = \bigcup_{\gamma \in \set{-1, 1}} \set{\pi(\beta) \in F: \gcd(\abs{\beta},d) = 1 \text{ and } \abs{\beta} \mid (\abs{F} + \gamma)} 
\end{align*}
and the sets involved in the union intersects at $1$ if $d$ is even or at $\pm 1$ if $d$ is odd.
\label{lemma: Per Td}
\end{lemma}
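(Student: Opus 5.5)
We prove the two inclusions separately and then determine where the two pieces of the union meet.

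\emph{Forward inclusion.} Let $\alpha\in\Per(T_d,F)$. Choose $\beta\in\overline{F}$ with $\pi(\beta)=\alpha$; this is possible by \cref{lemma: properties pi}. Since $\alpha\in F$, the equation $x^2-\alpha x+1=0$ has coefficients in $F$, so $\beta\in F_2$. Moreover $\beta\neq 0$ because $\alpha\neq\infty$. By \cref{lemma: Fp and U2}, either $\beta\in F^\times$, so that $\abs{\beta}\mid(\abs{F}-1)$, or $\beta\in U_2\subset H_2$, so that $\abs{\beta}\mid(\abs{F}+1)$. This gives the condition $\abs{\beta}\mid(\abs{F}+\gamma)$ for some $\gamma\in\set{-1,1}$.

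By \cref{lemma: Per Td bijection Per g}, $\beta$ is periodic for $g(x)=x^d$, so $\beta^{d^m}=\beta$ for some $m\geq1$. Hence $\abs{\beta}\mid d^m-1$, which forces $\gcd(\abs{\beta},d)=1$.

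\emph{Reverse inclusion.} Take $\beta\in F_2^\times$ with $\pi(\beta)\in F$, $\gcd(\abs{\beta},d)=1$ and $\abs{\beta}\mid(\abs{F}+\gamma)$. Since $d$ is a unit modulo $\abs{\beta}$, it has finite multiplicative order $m$ there. Then $d^m\equiv1\pmod{\abs{\beta}}$, so $g^m(\beta)=\beta$ and $\beta\in\Per(g,\overline{F})$. Applying \cref{lemma: Per Td bijection Per g} again, $\pi(\beta)\in\Per(T_d,F)$.

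The divisibility condition is not really needed for this direction. It only records which group $\beta$ lies in: $F^\times$ when $\gamma=-1$, and $H_2$ when $\gamma=1$.

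\emph{Intersection of the two pieces.} Suppose $\alpha$ lies in both sets. Then $\alpha=\pi(\beta)=\pi(\beta')$ with $\beta\in F^\times$ and $\beta'\in H_2$. By \cref{lemma: properties pi}, $\beta'\in\set{\beta,\beta^{-1}}$, so $\beta\in F^\times\cap H_2$. As in the proof of \cref{lemma: Fp and U2}, this means $\beta^2=1$, so $\beta=\pm1$ and $\alpha=\pm2$.

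Conversely, both pieces contain $\pi(1)$, since $\abs{1}=1$. They contain $\pi(-1)$ exactly when $\gcd(2,d)=1$, that is, when $d$ is odd, because $\abs{-1}=2$ divides both $\abs{F}\pm1$ when $\car(F)\neq2$.

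So the intersection is $\set{\pi(1)}$ if $d$ is even and $\set{\pi(\pm1)}=\set{\pm2}$ if $d$ is odd. The statement's phrase ``at $1$ / at $\pm1$'' must therefore refer to $\beta=\pm1$, that is, to the points $\pi(\pm1)$.

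\emph{Main obstacle.} Two points need care. The first is making sure every preimage $\beta$ of an $\alpha\in F$ lies in $F_2^\times$, which is the quadratic-equation argument above. The second is reconciling this intersection computation with the statement's wording about where the sets meet.
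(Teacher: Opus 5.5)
Your proof is correct and takes essentially the same route as the paper. Both combine the criterion $\beta\in\Per(g)\iff\abs{\beta}\mid d^m-1$ for some $m\iff\gcd(\abs{\beta},d)=1$ with \cref{lemma: Per Td bijection Per g} and the decomposition $F^\times\cup U_2$ from \cref{lemma: Fp and U2}. You are more complete than the paper's proof: it checks neither that preimages of $\alpha\in F$ lie in $F_2^\times$ nor the intersection claim. Your reading of that claim as referring to $\beta=\pm1$, i.e.\ the points $\pi(\pm1)=\pm2$, is the one the paper actually uses in \cref{lemma: number of Per for Td}.
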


\begin{proof}
We have the following chain of equivalences
\begin{align*}
\beta \in \Per(g, F_2^\times) & \Longleftrightarrow g^m(\beta) = \beta^{d^m} = \beta \text{ for some $m \geq 1$,} \\
& \Longleftrightarrow \abs{\beta} \mid (d^m-1) \text{ for some $m \geq 1$,} \\
& \Longleftrightarrow d^m \equiv 1 \pmod{\abs{\beta}} \text{ for some $m \geq 1$,} \\
& \Longleftrightarrow \gcd(\abs{\beta},d) = 1.
\end{align*}

Among the elements in the set $\Per(g, F_2^\times)$, we are only interested in those where $\pi(\beta) \in F$. By \cref{lemma: Fp and U2}, we have two disjoint cases. If $\beta \in F^\times$, then $\abs{\beta}$ must divide $\abs{F}-1$. If $\beta \in U_2$ then $\abs{\beta}$ must divide $\abs{F}+1$.
\end{proof}

Our next goal in this section is to calculate the number of periodic points of $\pm T_d$ in $F$. 

\begin{Definition}
Given $a,b \geq 1$, define 
\begin{align*}
r(a,b) := \text{ largest divisor of $a$ coprime to $b$}.
\end{align*}
\end{Definition}

\begin{lemma}
\label{lemma: number of elements with order coprime to a number}
Let $G$ be a cyclic finite group of order $n$. Let $m \geq 1$ such that $m \mid n$ and let $b \geq 1$. Define $$H := \set{g \in G: g^m = 1}.$$
Then, 
\begin{align*}
\# \set{h \in H: \gcd(\abs{h},b) = 1} = r(m,b).
\end{align*}
\end{lemma}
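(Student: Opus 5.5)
Since $G$ is cyclic of order $n$ and $m \mid n$, the subgroup $H = \set{g \in G : g^m = 1}$ is exactly the unique subgroup of $G$ of order $m$, and it is itself cyclic. So we may work entirely inside a cyclic group $H$ of order $m$. The condition $\gcd(\abs{h},b) = 1$ says that the order of $h$ is a divisor of $m$ coprime to $b$. The order of every element of $H$ divides $m$, and any divisor of $m$ coprime to $b$ divides $r(m,b)$, by maximality of $r(m,b)$ (the least common multiple of divisors of $m$ coprime to $b$ is again such a divisor). Hence $\gcd(\abs{h},b) = 1$ if and only if $\abs{h} \mid r(m,b)$, that is, if and only if $h^{r(m,b)} = 1$.

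It follows that the set we want to count is $\set{h \in H : h^{r(m,b)} = 1}$. Because $H$ is cyclic of order $m$ and $r(m,b) \mid m$, this set is the unique subgroup of $H$ of order $r(m,b)$. It therefore has exactly $r(m,b)$ elements.

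Alternatively, one can sum $\varphi(e)$ over the divisors $e$ of $m$ coprime to $b$. These are exactly the divisors of $r(m,b)$, and $\sum_{e \mid r(m,b)} \varphi(e) = r(m,b)$. Either route is short.

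The only step needing care is showing that divisors of $m$ coprime to $b$ are exactly the divisors of $r(m,b)$. To see this, write $m = r(m,b) \cdot s$, where $s$ collects the prime powers $p^{v_p(m)}$ with $p \mid b$. A divisor of $m$ coprime to $b$ contains no prime of $s$, so it divides $r(m,b)$. Conversely, every divisor of $r(m,b)$ is coprime to $b$.
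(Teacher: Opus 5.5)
Your proof is correct and essentially matches the paper's. The paper counts via $\sum_{u \mid r(m,b)} \varphi(u) = r(m,b)$, which is exactly your alternative route; your main route recognizes the same set as the unique subgroup of $H$ of order $r(m,b)$, and your explicit check that the divisors of $m$ coprime to $b$ are precisely the divisors of $r(m,b)$ fills in a step the paper uses without comment.
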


\begin{proof}
As $G$ is cyclic, then $H$ is a cyclic subgroup of $G$. Moreover, since $m \mid n$, then $\abs{H} = m$. Finally
\begin{align*}
\# \set{h \in H: \gcd(\abs{h},b) = 1 } &= \sum_{u \mid r(m,b)} \#\set{h \in H: \abs{h} = u} \\
&= \sum_{u \mid r(m,b)} \varphi(u) = r(m,b). \qedhere
\end{align*}
\end{proof}

So the number of periodic points for $\pm T_d$ in a finite field $F$ is the following:

\begin{lemma}
\label{lemma: number of Per for Td}
Let $d \geq 2$ and $F$ a finite field with $\car(F) \neq 2$. Then 
$$\# \Per(\pm T_d, F) = \frac{r(\abs{F}-1,d) + r(\abs{F}+1,d) }{2}.$$
\end{lemma}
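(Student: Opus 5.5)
First I would reduce to the case of $T_d$. By \cref{lemma: Per -Td and Per Td}, $\Per(-T_d,F)$ is either $\Per(T_d,F)$ itself or its image under $\alpha\mapsto-\alpha$. Either way it has the same cardinality, so it suffices to count $\Per(T_d,F)$.

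Next I would use \cref{lemma: Per Td}, which writes $\Per(T_d,F)$ as the union of two sets $A_{-1}$ and $A_{+1}$. Here
\[
A_\gamma=\pi\bigl(B_\gamma\bigr),\qquad B_\gamma=\set{\beta\in F_2^\times \st \gcd(\abs{\beta},d)=1,\ \abs{\beta}\mid(\abs{F}+\gamma)}.
\]
The group $F_2^\times$ is cyclic of order $\abs{F}^2-1$, and both $\abs{F}-1$ and $\abs{F}+1$ divide this order. So \cref{lemma: number of elements with order coprime to a number} applies with $G=F_2^\times$ and $m=\abs{F}+\gamma$, and it gives $\#B_\gamma=r(\abs{F}+\gamma,d)$.

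The next step is to pass from $B_\gamma$ to $A_\gamma$. Each $B_\gamma$ is a subgroup-type set closed under $\beta\mapsto\beta^{-1}$, because inversion preserves order. By \cref{lemma: properties pi}(2), the fibres of $\pi$ restricted to $B_\gamma$ are exactly the pairs $\set{\beta,\beta^{-1}}$. These pairs have size $2$ except when $\beta=\beta^{-1}$, that is, when $\beta=\pm1$. Write $e_\gamma$ for the number of elements of $\set{\pm1}$ lying in $B_\gamma$. Then $\#A_\gamma=(r(\abs{F}+\gamma,d)+e_\gamma)/2$.

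The element $1$ always lies in $B_\gamma$. The element $-1$ has order $2$, which divides $\abs{F}\pm1$ since $\abs{F}$ is odd, and it is coprime to $d$ exactly when $d$ is odd. So $e_\gamma=1$ if $d$ is even and $e_\gamma=2$ if $d$ is odd.

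The last step is inclusion–exclusion, and this bookkeeping of the overlap is where care is needed. By \cref{lemma: Fp and U2}, we have $B_{-1}\subseteq F^\times$ and $B_{+1}\subseteq U_2\cup\set{\pm1}$. The sets $F^\times$ and $U_2$ are disjoint, and their $\pi$-images are disjoint apart from the images of $\pm 1$ (the $\pi$-fibres over $F$ are either inside $F^\times$ or inside $U_2$). Hence $A_{-1}\cap A_{+1}=\pi(B_{-1}\cap B_{+1}\cap\set{\pm1})$, which has exactly $e$ elements, where $e=e_{-1}=e_{+1}$. This matches the statement of \cref{lemma: Per Td}: the intersection is $\set{2}$ when $d$ is even and $\set{\pm 2}$ when $d$ is odd. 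Therefore
\[
\#\Per(T_d,F)=\frac{r(\abs{F}-1,d)+e}{2}+\frac{r(\abs{F}+1,d)+e}{2}-e=\frac{r(\abs{F}-1,d)+r(\abs{F}+1,d)}{2}.
\]
One point to handle here: \cref{lemma: Per Td} only concerns points of $F$ and excludes $\infty$. So I would only need to remark that $\infty$ is a fixed point lying in $\PP^1$ but not in $F$, and it is therefore not counted.
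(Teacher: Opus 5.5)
Your proposal is correct and follows essentially the same route as the paper: reduce to $T_d$, count each set of $\beta$'s with the cyclic-group lemma to get $r(\abs{F}\pm 1,d)$, halve using that $\pi$ is $2$-to-$1$ away from $\pm 1$, and correct for the images of $\pm 1$ shared by both pieces (your $e$ is exactly the paper's $\chi(d)$). Your inclusion--exclusion is a more explicit version of the paper's bookkeeping, and you correctly identify the overlap as $\{2\}$ or $\{\pm 2\}$, i.e.\ the $\pi$-images of $\pm 1$.
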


\begin{proof}
By \cref{lemma: Per -Td and Per Td} we only need to worry about the periodic points of $T_d$. As before, let $F_2$ be the extension of $F$ of degree $2$. 

Consider $$H_1 = \set{\beta \in F_2^\times: \beta^{\abs{F}-1} = 1}.$$ As $F_2^\times$ is a cyclic subgroup, by \cref{lemma: number of elements with order coprime to a number} we have
\begin{align*}
\#\set{\beta \in F_2^\times: \gcd( \abs{\beta},d) = 1 \text{ and } \abs{\beta} \mid (\abs{F} -1)} = r(\abs{F}-1, d).
\end{align*}
The same argument yields for $H_2$ so 
\begin{align*}
\#\set{\beta \in F_2^\times: \gcd( \abs{\beta},d) = 1 \text{ and } \abs{\beta} \mid (\abs{F} +1)} = r(\abs{F}+1, d).
\end{align*}

To calculate $\#\Per(T_d, F)$, observe that $\alpha = \pi(\beta) = \pi(\beta^{-1})$, so every element with $\beta \neq \pm 1$ is counted twice. Then $\beta = 1$ gives $\gcd(\abs{\beta},d) = 1$ and $\beta = -1$ gives $\gcd(\abs{\beta},d) = 1$ if and only if $d$ is odd. If we define $\chi(d) = 1$ if $d$ is even and $\chi(d) = 2$ if $d$ is odd, then, $\chi(d)$ indicates if we need to isolate $1$ or $2$ elements depending on the parity of $d$, to avoid an overcount. Therefore,
\begin{equation*}
\# \Per(T_d, F) = \frac{r(\abs{F}-1,d) -\chi(d)}{2} + \frac{r(\abs{F}+1,d) -\chi(d)}{2} + \chi(d). \qedhere
\end{equation*}
\end{proof}

\begin{remark}
\label{remark: Perinf Td}
If $K$ is a number field and $d \geq 2$, then by \cref{lemma: number of Per for Td} we obtain
\begin{align*}
\Per_{\inf}(\pm T_d,K) = \frac{1}{2} \liminf_{N(\mathfrak{p}) \rightarrow +\infty} \frac{r(N(\mathfrak{p})-1,d) + r(N(\mathfrak{p})+1,d)}{N(\mathfrak{p})}
\end{align*}
\end{remark}

\section{Polynomials linearly conjugate over $\CC$ to $\pm T_d$} 
\label{section: Polynomials linearly conjugate over C to T_d}

In this section we prove point (4) in \cref{Theorem: main result clasification}:

\begin{lemma}
\label{lemma: Perinf when conjugating in K}
Let $K$ be a number field and $f,g \in K[x]$ such that they are linearly conjugate over $K$. Then
\begin{align*}
\Per_{\inf}(f,K) = \Per_{\inf}(g,K).
\end{align*}
In particular if $f$ is linearly conjugate over $K$ to $\pm T_d$, then 
$$\Per_{\inf}(f,K) = \Per_{\inf}(T_d,K).$$
\end{lemma}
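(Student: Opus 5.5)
The plan is to show that for all but finitely many primes $\mathfrak{p}$, the reduction of the conjugacy is a bijection of $\FF_\mathfrak{p}$ that carries the periodic points of $g_\mathfrak{p}$ onto those of $f_\mathfrak{p}$. Write $g = L^{-1}\circ f \circ L$ with $L \in K(x)$ of degree $1$. First reduce to the case where $L$ is affine. If $\deg f \geq 3$ and $f$ is conjugate to $\pm T_d$, then \cref{proposition: properties Chebyshev polynomials}(3) shows that $f$ is totally ramified only at $\infty$, and \cref{lemma: L is a polynomial} gives that $L$ is a polynomial. For the general statement I would avoid this reduction and argue directly with M\"obius maps acting on $\PP^1$. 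Any such $L$ with $K$-coefficients has good reduction at all but finitely many $\mathfrak{p}$, meaning its coefficients are $\mathfrak{p}$-integral and its determinant is a $\mathfrak{p}$-unit. Hence $L_\mathfrak{p}$ is a bijection of $\PP^1(\FF_\mathfrak{p})$.

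Next, exclude the finitely many primes where $f$, $g$, $L$ or $L^{-1}$ has bad reduction. For the remaining primes, reduction commutes with composition, so $g_\mathfrak{p} = L_\mathfrak{p}^{-1}\circ f_\mathfrak{p}\circ L_\mathfrak{p}$ on $\PP^1(\FF_\mathfrak{p})$. It follows that $g_\mathfrak{p}^m = L_\mathfrak{p}^{-1}\circ f_\mathfrak{p}^m\circ L_\mathfrak{p}$, and therefore $\alpha$ is $g_\mathfrak{p}$-periodic if and only if $L_\mathfrak{p}(\alpha)$ is $f_\mathfrak{p}$-periodic.

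The one point needing care is that $\Per(f,\FF_\mathfrak{p})$ is counted in the affine line, whereas $L_\mathfrak{p}$ acts on $\PP^1$ and may send $\infty$ to a finite point. For polynomials, $\infty$ is a fixed point on $\PP^1$. Consequently, the number of periodic points in $\FF_\mathfrak{p}$ and the number in $\PP^1(\FF_\mathfrak{p})$ differ by exactly $1$, for both $f$ and $g$. The bijection therefore gives $\#\Per(f,\FF_\mathfrak{p}) = \#\Per(g,\FF_\mathfrak{p})$. If one has already reduced to affine $L$, this count is not even needed. Dividing by $N(\mathfrak{p})$ and taking the $\liminf$ yields equality, since finitely many excluded primes do not affect the limit.

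For the ``in particular'' part, apply the above with $g = \pm T_d$ and then invoke \cref{lemma: Per -Td and Per Td}: $\Per(-T_d,\FF_\mathfrak{p}) = \pm \Per(T_d,\FF_\mathfrak{p})$ has the same cardinality, so $\Per_{\inf}(-T_d,K) = \Per_{\inf}(T_d,K)$. Alternatively one can use \cref{remark: Perinf Td} directly. I do not expect a real obstacle here; the only care needed is the good-reduction bookkeeping and the handling of the point at infinity.
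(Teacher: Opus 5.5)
Your proposal is correct and follows essentially the same route as the paper. Both discard the finitely many primes where $f$, $g$ or $L$ has bad reduction, use the reduced conjugacy $g_\mathfrak{p} = L_\mathfrak{p}^{-1}\circ f_\mathfrak{p}\circ L_\mathfrak{p}$ to match periodic points, pass to the $\liminf$, and handle $-T_d$ via the lemma comparing $\Per(-T_d,F)$ with $\Per(T_d,F)$.

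Your extra step for a non-affine $L$ fills in a detail the paper's one-line proof takes for granted: you count on $\PP^1(\FF_\mathfrak{p})$ and remove the fixed point $\infty$ on each side, which is valid once $\deg f \geq 1$ (constants are trivial). Your reading of the $\pm T_d$ step as an equality of cardinalities is also the correct interpretation of the paper's sign formula.
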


\begin{proof}
Let $L$ be a linear conjugation over $K$ between $f$ and $g$. Then, if $\mathfrak{p}$ is a prime ideal in $\OK$ of good reduction for $f$, $g$ and $L$, then $L$ induces a bijection between $\Per(f_\mathfrak{p},\FF_\mathfrak{p})$ and $\Per(g_\mathfrak{p},\FF_\mathfrak{p})$. As $\Per_{\inf}(-T_d,K) = (-1)^{d+1}\Per_{\inf}(T_d,K)$ by \cref{lemma: Per -Td and Per Td}, then the second part follows.
\end{proof}

Let $f$ be a polynomial with coefficients in a number field $K$ and degree $d$. First of all, observe that we may assume that the $(d-1)$th coefficient of $f$ is zero. Indeed, conjugate $f$ by a map of the form $L(x) = x+b$. If $f(x) = \sum_{i = 0}^d b_i x^i$, then 
\begin{align*}
L^{-1} \circ f \circ L(x) = b_dx^d + (d \cdot b \cdot b_d + b_{d-1})x^{d-1} + \cdots.
\end{align*}
So, we can take $$b = -\frac{b_{d-1}}{d \cdot b_d}.$$

Suppose $f$ is linearly conjugate over $\CC$ to $\pm T_d$ by a linear map $L$. If $d = 2$ and $L$ is not a polynomial, then $L$ must exchange $0$ and $\infty$ as those are the totally ramified critical points of $\pm T_2$. Thus $L(x) = a/x$ and 
\begin{align*}
L^{-1} \circ (\pm T_2) \circ L(x) = \pm \frac{a x^2}{a^2-2x^2},
\end{align*}
which is not a polynomial. If $d \geq 3$, by \cref{proposition: properties Chebyshev polynomials} we have $\pm T_d$ is only totally ramified at $\infty$ and therefore $L$ is a polynomial by \cref{lemma: L is a polynomial}. Write $L(x) = ax+b$ and $\pm T_d = \sum_{i = 0}^d a_i x^i$. Recall that by \cref{proposition: properties Chebyshev polynomials} we have $a_{d-1} = 0$. Then
\begin{align*}
f(x) = L^{-1} \circ (\pm T_d) \circ L(x) = a^{d-1} \cdot a_d \cdot x^d + a^{d-2} \cdot a_d \cdot d \cdot b \cdot x^{d-1} + \cdots 
\end{align*}
which implies that $b = 0$ as we are assuming that the $(d-1)$th coefficient of $f$ is zero. Then 
\begin{align*}
f(x) = L^{-1} \circ (\pm T_d) \circ L(x) = \sum_{i = 0}^d a^{i-1} \cdot a_i \cdot x^i.
\end{align*}

If $d$ is even, then $a \cdot a_2 \in K$ and as $a_2 \neq 0$ by \cref{proposition: properties Chebyshev polynomials}, then $a \in K$. By \cref{remark: Perinf Td} and \cref{lemma: Perinf when conjugating in K} we conclude that 
\begin{align}
\Per_{\inf}(f,K) = \frac{1}{2} \liminf_{N(\mathfrak{p}) \rightarrow +\infty} \frac{r(N(\mathfrak{p})-1,d) + r(N(\mathfrak{p})+1,d)}{N(\mathfrak{p})}.
\label{equation: Perinf f with f even degree and conj to Td}
\end{align}

If $d$ is odd, then $a_2 = 0$ but $a_3 \neq 0$, so we conclude that $a^2 \cdot a_3 \in K$ and thus $a^2 \in K$. If $a \in K$ then $\Per_{\inf}(f,K)$ is calculated as in \cref{equation: Perinf f with f even degree and conj to Td}, so, let us assume that $a \notin K$.

\begin{proposition}
\label{proposition: Perinf f with a not in K}
Let $K$ be a number field, an odd number $d \geq 2$ and consider $f = L^{-1} \circ (\pm T_d) \circ L \in K[x]$ with $L(x) = ax$ and $a \notin K$. Then,  
\begin{align*}
\Per_{\inf}(f,K) = \frac{1}{2} \liminf_{N(\mathfrak{p}) \rightarrow +\infty} \frac{r(N(\mathfrak{p})-1,d) + r(N(\mathfrak{p})+1,d)}{N(\mathfrak{p})}.
\end{align*}
\end{proposition}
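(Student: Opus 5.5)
The plan is to show that for every prime $\mathfrak{p}$ of good reduction (for $f$, for $a^2$ and $a^{-2}$, and with $\car(\FF_\mathfrak{p}) \neq 2$) we have, with $q = N(\mathfrak{p})$,
\begin{align*}
\# \Per(f_\mathfrak{p},\FF_\mathfrak{p}) = \frac{r(q-1,d)+r(q+1,d)}{2}.
\end{align*}
The proposition then follows by taking the $\liminf$. Write $c = a^2 \in K$, so that $K' = K(a)$ is quadratic over $K$. Since $d$ is odd, only odd $i$ appear in $f(x) = \sum_i a^{i-1}a_i x^i$, so all coefficients of $f$ lie in $K$; this is consistent with the hypotheses. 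I split the good primes according to whether $c \bmod \mathfrak{p}$ is a square in $\FF_\mathfrak{p}$.

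\emph{Case 1: $c$ is a square mod $\mathfrak{p}$.} Choose $\bar a \in \FF_\mathfrak{p}$ with $\bar a^2 = c$. Then $f_\mathfrak{p} = \bar a^{-1}(\pm T_d)(\bar a x)$ holds over $\FF_\mathfrak{p}$, because each coefficient $a^{i-1}a_i$ with $i$ odd is a polynomial in $c$. So $x \mapsto \bar a x$ is a bijection $\Per(f_\mathfrak{p},\FF_\mathfrak{p}) \to \Per(\pm T_d,\FF_\mathfrak{p})$. The count is then \cref{lemma: number of Per for Td}.

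\emph{Case 2: $c$ is not a square mod $\mathfrak{p}$.} Let $F = \FF_\mathfrak{p}$ and choose $\bar a \in F_2$ with $\bar a^2 = c$, so that $\bar a^q = -\bar a$. The map $x \mapsto y = \bar a x$ identifies $F$ with $\set{y \in F_2 : y^q = -y}$ and conjugates $f_\mathfrak{p}$ to $\pm T_d$. Since $d$ is odd, $T_d$ is an odd polynomial (\cref{proposition: properties Chebyshev polynomials}), so $\pm T_d$ commutes with the twisted Frobenius $\tau(y) = -y^q$. Hence I must count the periodic points of $T_d$ (by \cref{lemma: Per -Td and Per Td}, the sign does not matter) among the $\tau$-fixed points of $F_2$.

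Write $y = \pi(\beta)$ with $\beta \in \overline{F}$ via \cref{lemma: properties pi}. We have $\tau(\pi(\beta)) = \pi(-\beta^q)$, so $y$ is $\tau$-fixed iff $-\beta^q \in \set{\beta,\beta^{-1}}$. That is, either $\beta^{q-1} = -1$ or $\beta^{q+1} = -1$. These are exactly the elements of order dividing $2(q\mp1)$ but not dividing $q \mp 1$, inside the cyclic group $\FF_{q^2}^\times$. The two sets are disjoint, since otherwise $\beta^2 = 1$, which is incompatible with $\beta^{q\pm1} = -1$. Neither set contains $\pm 1$, and $\beta \in \set{0,\infty}$ only gives $y = \infty$. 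So $\beta \mapsto \pi(\beta)$ is exactly $2$-to-$1$ onto the $\tau$-fixed points.

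By \cref{lemma: Per Td bijection Per g} (applied over $\overline{F}$), periodicity of $y$ is equivalent to $\gcd(\abs{\beta},d) = 1$. Using \cref{lemma: number of elements with order coprime to a number} and the fact that $d$ is odd, the number of such $\beta$ in the first set is $r(2(q-1),d) - r(q-1,d) = r(q-1,d)$. Similarly, the second set contributes $r(q+1,d)$. Dividing by $2$ gives the same formula as in Case 1.

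The main obstacle I expect is Case 2: getting the twisted-Frobenius bookkeeping right. The key checks are that $\tau$ commutes with $T_d$ (this is where oddness of $d$ is essential) and that there are no exceptional fibres at $\pm1$. Without the latter, the halving would produce correction terms like $\chi(d)$ in \cref{lemma: number of Per for Td}, and these must be shown to vanish here. Once both cases give the identical count for all but finitely many $\mathfrak{p}$, the $\liminf$ equality is immediate and agrees with \cref{remark: Perinf Td}.
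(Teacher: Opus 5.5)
Your proof is correct and follows the paper's argument. Your square/non-square split of $c=a^2$ modulo $\mathfrak{p}$ is the paper's split/inert dichotomy, and your $\tau$-fixed set $\set{y : y^q=-y}$ is the paper's $a\FF_\mathfrak{p}=\ker(\tr)$. Counting $\beta$ with $\beta^{q\pm 1}=-1$ via \cref{lemma: number of elements with order coprime to a number} and $r(2n,d)=2r(n,d)$ is also exactly what the paper does.

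One detail goes in your favour. You correctly observe that the sets $\set{\beta^{q-1}=-1}$ and $\set{\beta^{q+1}=-1}$ are disjoint, which gives the exact count $\tfrac12\bigl(r(q-1,d)+r(q+1,d)\bigr)$ in the inert case as well. The paper instead claims these sets meet in the elements of order $4$ and subtracts a bounded correction term. That claim is wrong, because one of $q\pm1$ is divisible by $4$, so no element of order $4$ lies in both sets. The slip is harmless for the $\liminf$.
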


\begin{proof}
Call $K' = K(a)$. By hypothesis $K'/K$ is an extension of degree $2$. Let $\mathfrak{p}$ be a prime ideal such that $\car(\FF_\mathfrak{p}) \neq 2$ and $\mathfrak{p}$ does not ramify at $K'$. Notice that we are excluding only finitely many prime ideals. Let $\mathfrak{p}'$ be a prime ideal in $\mathcal{O}_{K'}$ such that $\mathfrak{p}' \mid \mathfrak{p}$.

As $f$ and $\pm T_d$ are linearly conjugate over $K'$, then $L$ induces a bijection
\begin{align}
\label{equation: bijection Per f and Per Td d odd a not in K}
\Per(f_{\mathfrak{p}'}, \FF_{\mathfrak{p}'}) &\rightarrow  \Per((\pm T_d)_{\mathfrak{p}'}, \FF_{\mathfrak{p}'}) \\
\notag \alpha &\mapsto a \alpha.
\end{align}
Moreover, since $d$ is odd by \cref{lemma: Per -Td and Per Td} we have $\Per((\pm T_d)_{\mathfrak{p}'}, \FF_{\mathfrak{p}'}) = \Per((T_d)_{\mathfrak{p}'}, \FF_{\mathfrak{p}'})$.

We have two cases for $\mathfrak{p}$. If $\mathfrak{p}$ is split, then $\FF_{\mathfrak{p}'} = \FF_\mathfrak{p}$ and therefore by \cref{lemma: number of Per for Td},
\begin{align}
\# \Per(f_\mathfrak{p}, \FF_\mathfrak{p}) = \frac{r(N(\mathfrak{p})-1,d) + r(N(\mathfrak{p})+1,d)}{2}.
\label{equation: Per f in split case}
\end{align}

If $\mathfrak{p}$ is inert, then $\FF_{\mathfrak{p}'}/\FF_\mathfrak{p}$ is an extension of degree $2$ and consequently the Frobenius map sends $a$ to $-a$ in $\FF_\mathfrak{p'}$.

Recall the trace is (in this case) the surjective additive group homomorphism
\begin{align*}
\tr: \FF_{\mathfrak{p}'} &\rightarrow \FF_\mathfrak{p}, \\
y &\mapsto y + \Frob_\mathfrak{p}(y).
\end{align*}
As it is surjective we have $$\# \ker(\tr) = N(\mathfrak{p}).$$

We claim that
\begin{align*}
a \FF_\mathfrak{p} = \ker(\tr).
\end{align*}

Indeed, if $y\in a \FF_\mathfrak{p}$ then $y = ax$ for some $x \in \FF_\mathfrak{p}$. Therefore 
\begin{align*}
\tr(y) = ax -ax^{N(\mathfrak{p})} = 0
\end{align*}
as $x \in \FF_\mathfrak{p}$. This proves that $a\FF_\mathfrak{p} \subseteq \ker(\tr)$, but since they have the same cardinality, then $a\FF_\mathfrak{p} = \ker(\tr)$.

Let $F_2$ be the extension of degree $2$ of $\FF_{\mathfrak{p}'}$ and recall the map $\pi: \PP^1(F_2) \rightarrow \PP^1(F_2)$. Then, if $\beta \in F_2^\times$,
\begin{align*}
\tr(\pi(\beta)) = \beta + \beta^{-1} + \beta^{N(\mathfrak{p})} + \beta^{-N(\mathfrak{p})} = \frac{(\beta^{N(\mathfrak{p})-1} + 1)(\beta^{N(\mathfrak{p})+1} + 1)}{\beta^{N(\mathfrak{p})}}.
\end{align*}

We are interested in calculating $\# \Per(f_\mathfrak{p}, \FF_\mathfrak{p})$. Using the bijection in \cref{equation: bijection Per f and Per Td d odd a not in K}, we obtain that 
\begin{align*}
\# \Per(f_\mathfrak{p}, \FF_\mathfrak{p}) = \#(a \FF_\mathfrak{p} \cap \Per((T_d)_{\mathfrak{p}'}, \FF_{\mathfrak{p}'})).
\end{align*}

Using \cref{lemma: Per Td}, we reduce the previous problem to calculate the number of elements of
\begin{align*}
V_\gamma := \set{\pi(\beta) \in \FF_{\mathfrak{p}'}: \tr(\pi(\beta)) = 0, \,\, \gcd(\abs{\beta},d) = 1 \text{ and } \abs{\beta} \mid (N(\mathfrak{p})^2+\gamma)} 
\end{align*}
for $\gamma \in \set{-1,1}$.

Start with $\gamma = 1$. We claim that 
$$\set{\beta \in F_2^\times: \tr(\pi(\beta)) = 0 \text{ and } \abs{\beta} \mid (N(\mathfrak{p})^2+1)} = \emptyset,$$ which in particular implies that $V_1 = \emptyset$. Indeed, if $\tr(\pi(\beta)) = 0$ then $\beta^{N(\mathfrak{p}) \pm 1} = -1$ for either $+$ or $-$ in the exponent. Therefore $$\abs{\beta} \mid 2(N(\mathfrak{p})\pm 1).$$
Notice that as $N(\mathfrak{p})$ is odd, the previous formula implies that $$\abs{\beta} \mid (N(\mathfrak{p})^2 - 1).$$
On the other hand if $\abs{\beta} \mid (N(\mathfrak{p})^2 + 1)$, subtracting we conclude that $\abs{\beta} \mid 2$ and therefore $\beta = \pm 1$. However $\tr(\pi(\pm 1)) \neq 0$.

If $\gamma = -1$ consider 
$$H_{-1} := \set{\beta \in F_2^\times: \tr(\pi(\beta)) = 0 \text{ and } \abs{\beta} \mid (N(\mathfrak{p})^2-1)}.$$
By the calculation done before if $\tr(\pi(\beta)) = 0$ then $\abs{\beta} \mid (N(\mathfrak{p})^2-1)$, so
$$H_{-1} = \set{\beta \in F_2^\times: \tr(\pi(\beta)) = 0}.$$
Let $$G_c := \set{\beta \in F_2^\times: \beta^c = 1}.$$ 

Then
$$H_{-1} = \bigcup_{\gamma \in \set{-1,1}} (G_{2(N(\mathfrak{p})+\gamma)} \setminus G_{N(\mathfrak{p})+\gamma})$$ as $\tr(\pi(\beta)) = 0$ if and only if $\beta^{N(\mathfrak{p}) + \gamma} = -1$ for either $\gamma = 1$ or $\gamma = -1$.

Moreover $$\beta \in \bigcap_{\gamma \in \set{-1,1}} (G_{2(N(\mathfrak{p})+\gamma)} \setminus G_{N(\mathfrak{p})+\gamma})$$ if and only if $\abs{\beta} = 4$. 

Notice that as $\pm 1 \notin H_{-1}$, if $\pi(\beta) \in V_{-1}$ then $\pi(\beta)$ has two different preimages in $H_{-1}$. Using this observation and \cref{lemma: number of elements with order coprime to a number} we obtain
\begin{align*}
\# V_{-1} = \frac{1}{2} \left(\sum_{\gamma \in \set{-1,1}} (r(2(N(\mathfrak{p}) + \gamma),d) - r(N(\mathfrak{p}) + \gamma,d)) \right) - \frac{\#\set{\beta \in F_2^\times: \abs{\beta} = 4}}{2}.
\end{align*}
As $d$ is odd then $r(2(N(\mathfrak{p}) + \gamma),d) = 2 r(N(\mathfrak{p}) + \gamma,d)$ and therefore
\begin{align*}
 \# V_{-1} = \frac{1}{2} \left( \sum_{\gamma \in \set{-1,1}} r(N(\mathfrak{p}) + \gamma,d) \right) - \frac{\#\set{\beta \in F_2^\times: \abs{\beta} = 4}}{2}.
\end{align*}
As $V_1$ was empty, then $\# \Per(f_\mathfrak{p}, \FF_\mathfrak{p}) = \# V_{-1}$ allowing us to conclude
\begin{align}
\# \Per(f_\mathfrak{p}, \FF_\mathfrak{p}) = \frac{1}{2} \left( \sum_{\gamma \in \set{-1,1}} r(N(\mathfrak{p}) + \gamma,d) \right) - \frac{\#\set{\beta \in F_2^\times: \abs{\beta} = 4}}{2}.
\label{equation: Per f in inert case}
\end{align}

Dividing \cref{equation: Per f in split case} and \cref{equation: Per f in inert case} by $N(\mathfrak{p})$ and taking limit, we obtain
$$\Per_{\inf}(f,K) = \frac{1}{2} \liminf_{N(\mathfrak{p}) \rightarrow +\infty} \frac{r(N(\mathfrak{p})-1,d) + r(N(\mathfrak{p})+1,d)}{N(\mathfrak{p})}$$ as desired.
\end{proof}

\section{Analysis of $\Per_{\inf}(T_d,K)$}
\label{section: analysis of Perinf(Td)}

In this last section, we prove the points from (5) to (10) in \cref{Theorem: main result clasification} completing the classification.

Fix $K$ a number field and recall $m_K$ is the greatest positive number such that $K$ contains a $m_K$th primitive root of unity. By \cref{remark: Perinf Td} and \cref{proposition: Perinf f with a not in K} we have that if $f$ is linearly conjugate to $\pm T_d$ over $\CC$ then 
$$\Per_{\inf}(f,K) = \frac{1}{2} \liminf_{N(\mathfrak{p}) \rightarrow +\infty} \frac{r(N(\mathfrak{p})-1,d) + r(N(\mathfrak{p})+1,d)}{N(\mathfrak{p})},$$ so we just need to analyze the possible values of this limit in the different cases considered in \cref{Theorem: main result clasification}. For this task, we will use \cref{proposition: cebotarev in my case} many times.

Let $\mathfrak{p}$ be a prime of $\OK$. Since we are taking a limit, we may assume the norm of $\mathfrak{p}$ is coprime $m_K$. Notice that in particular $N(\mathfrak{p})$ is coprime to $2$ as $2 \mid m_K$ since $-1 \in \QQ$. Under this assumption, the polynomial $$x^{m_K}-1$$ is separable in $\FF_\mathfrak{p}$ and therefore all the $m_K$th root of unity are different in $\FF_\mathfrak{p}$. As $\xi_{m_K} \in \OK$ this implies that
\begin{equation}
m_K \mid (N(\mathfrak{p})-1).
\label{equation: mK divides N(p)-1}
\end{equation}

\bigskip

\noindent 6.1. \underline{$d$ has at least two prime factors and $\rad(d) \nmid \rad(m_K)$} 

Let $p_1$ and $p_2$ be two different prime factors of $d$ such that $p_2 \nmid m_K$. Let $m_1 \geq v_{p_1}(m_K)$ and $m_2 \geq 1$. Consider the system of congruences
\begin{align*}
\left \{ \begin{matrix} 
x &\equiv & 1 \pmod{m_K} \\
x &\equiv & 1 \pmod{p_1^{m_1}} \\
x &\equiv & -1 \pmod{p_2^{m_2}} \\
\end{matrix}\right..
\end{align*} 
As $p_2 \nmid m_K$, the system has a solution. By \cref{proposition: cebotarev in my case} there are infinitely many primes $\mathfrak{p}$ in $\OK$ such that 
$$p_1^{m_1} \mid (N(\mathfrak{p})-1) \text{ and } p_2^{m_2} \mid (N(\mathfrak{p})+1).$$

As $p_1$ and $p_2$ are factors of $d$ then 
$$r(N(\mathfrak{p})-1,d) \leq \frac{N(\mathfrak{p})-1}{p_1^{m_1}} \text{ and } r(N(\mathfrak{p})+1,d) \leq \frac{N(\mathfrak{p})+1}{p_2^{m_2}}.$$ Therefore 
\begin{align*}
\Per_{\inf}(f,K) \leq \frac{1}{2} \left( \frac{1}{p_1^{m_1}} + \frac{1}{p_2^{m_2}} \right).
\end{align*}
As $m_1$ and $m_2$ are arbitrary, then 
\begin{align*}
\Per_{\inf}(f,K) = 0.
\end{align*}

\bigskip

\noindent 6.2. \underline{$d$ is odd and $\rad(d) \mid \rad(m_K)$}

We claim that $$r(N(\mathfrak{p})+1,d) = N(\mathfrak{p})+1.$$ Indeed, suppose that $r(N(\mathfrak{p})+1,d) < N(\mathfrak{p})+1$. Then, there exists $p$ prime such that 
\begin{align*}
p \mid (N(\mathfrak{p})+1) \text{ and } p \mid d.
\end{align*}
As $d$ is odd, then $p$ is odd. 

On the other hand, if $p$ divides $d$, then $p$ also divides $m_K$ by hypothesis so by \cref{equation: mK divides N(p)-1} $$p \mid (N(\mathfrak{p})-1).$$ Therefore $p \mid 2$, leading to a contradiction since $p$ is odd. 

Let $p$ be any prime divisor of $d$. For any $m \geq 1$, consider the system of congruences
\begin{align*}
\left \{ \begin{matrix} 
x &\equiv & 1 \pmod{m_K} \\
x &\equiv & 1 \pmod{p^m} \\
\end{matrix}\right..
\end{align*}
As the system has a solution, by \cref{proposition: cebotarev in my case} there are infinitely many primes $\mathfrak{p}$ in $\OK$ such that 
$$p^m \mid (N(\mathfrak{p})-1)$$ and consequently $$r(N(\mathfrak{p})-1,d) \leq \frac{N(\mathfrak{p})-1}{p^m}.$$
for any $m \geq 1$.
In conclusion, 
\begin{align*}
\Per_{\inf}(f,K) = \frac{1}{2}.
\end{align*}

\bigskip

\noindent 6.3. \underline{$d$ is even, $\rad(d) \mid \rad(m_K)$ and $v_2(m_K) \geq 2$}

We claim that $$r(N(\mathfrak{p})+1,d) = \frac{N(\mathfrak{p})+1}{2}.$$ If not there exists $p$ prime such that 
\begin{align*}
p \mid \frac{N(\mathfrak{p})+1}{2} \text{ and } p \mid d.
\end{align*}
Since $p$ divides $d$, as in the previous case we obtain that $$p \mid (N(\mathfrak{p})-1)$$ and so $p = 2$. But now if $p = 2$ then $$4 \mid (N(\mathfrak{p})+1)$$ and since $4 \mid m_K$ by hypothesis, then $$4 \mid (N(\mathfrak{p})-1),$$ leading to a contradiction.

Now taking $p$ a prime divisor of $d$, considering the same system of congruences as in the previous case and arguing the same way we obtain
\begin{align*}
\Per_{\inf}(f,K) = \frac{1}{4}.
\end{align*}

\bigskip

\noindent 6.4 \underline{$d$ is even, has at least two prime factors, $\rad(d) \mid \rad(m_K)$ and $v_2(m_K) = 1$}

Let $p$ be an odd prime dividing $d$. Let $m_1 \geq v_p(m_K)$ and $m_2 \geq v_2(m_K)$. Consider the system of congruences
\begin{align*}
\left \{ \begin{matrix} 
x &\equiv & 1 \pmod{m_K} \\
x &\equiv & 1 \pmod{p^{m_1}} \\
x &\equiv & -1 \pmod{2^{m_2}} \\
\end{matrix}\right..
\end{align*} 
As $\gcd(m_K, 2^{m_2}) = 2$ the system has a solution and so by \cref{proposition: cebotarev in my case} there are infinitely many primes $\mathfrak{p}$ in $\OK$ such that 
$$p^{m_1} \mid (N(\mathfrak{p})-1) \text{ and } 2^{m_2} \mid (N(\mathfrak{p})+1).$$ Proceeding as in the first case we obtain 
\begin{align*}
\Per_{\inf}(f,K) = 0.
\end{align*}

\bigskip

\noindent 6.5 \underline{$d$ is an odd prime power}

Let $p$ be the prime dividing $d$. Observe that either $r(N(\mathfrak{p})-1,d) = N(\mathfrak{p})-1$ or $r(N(\mathfrak{p})+1,d) = N(\mathfrak{p})+1$. Indeed, if not
\begin{align*}
p \mid (N(\mathfrak{p})-1) \text{ and } p \mid (N(\mathfrak{p})+1),
\end{align*}
which contradicts that $p$ is odd. Then, arguing as in the second case we obtain $$\Per_{\inf}(T_d,K) = \frac{1}{2}.$$ 

\bigskip

\noindent 6.6 \underline{$d$ is an even prime power}

As before, we have either $$r(N(\mathfrak{p})-1,d) = \frac{N(\mathfrak{p})-1}{2} \text{ or } r(N(\mathfrak{p})+1,d) = \frac{N(\mathfrak{p})+1}{2}.$$ If not, then $4$ divides $N(\mathfrak{p})-1$ and $N(\mathfrak{p})+1$ which leads to a contradiction. 

Finally, taking $m \geq v_2(m_K)$ and consider the system of congruences
\begin{align*}
\left \{ \begin{matrix} 
x &\equiv & 1 \pmod{m_K} \\
x &\equiv & 1 \pmod{2^m} \\
\end{matrix}\right..
\end{align*} 
we conclude that
$$\Per_{\inf}(T_d,K) = \frac{1}{4}.$$

\subsection*{Acknowledgements} 

The author would like to thank Rostislav Grigorchuk and Thomas Tucker for their support and feedback, as well as Jorge Fariña-Asategui for useful suggestions.


\bibliographystyle{unsrt}

\end{document}